\documentclass[reqno]{amsart}
\usepackage{amssymb}
\usepackage{amsfonts}
\usepackage{amssymb}
\newtheorem{theorem}{Theorem}[section]
\newtheorem{proposition}[theorem]{Proposition}
\newtheorem{lemma}[theorem]{Lemma}

\theoremstyle{definition}
\newtheorem{definition}[theorem]{Definition}
\newtheorem{example}[theorem]{Example}

\newtheorem{remark}[theorem]{Remark}

\allowdisplaybreaks
\numberwithin{equation}{section}
\begin{document}
	\title[Explicit hypergeometric modularity]
	{Explicit hypergeometric modularity of certain weight two and four Hecke eigenforms}
	\author{Sipra Maity}
	\address{Department of Mathematics, Indian Institute of Technology Guwahati, North Guwahati, Guwahati-781039, Assam, INDIA}
	\curraddr{}
	\email{m.sipra@iitg.ac.in}
	\author{Rupam Barman}
	\address{Department of Mathematics, Indian Institute of Technology Guwahati, North Guwahati, Guwahati-781039, Assam, INDIA}
	\curraddr{}
	\email{{\tiny rupam@iitg.ac.in}}
	\thanks{}
	\subjclass[2010]{ 33C05, 33C65, 11F03, 11F80.}
	\date{30th March 2026, version-1}
	\keywords{Hypergeometric functions; Modular forms;
		Galois representations; Appell series.}
	\begin{abstract} 
	Recently, Allen \emph{et al.} developed the Explicit Hypergeometric Modularity Method (EHMM) that establishes the modularity of a large class of hypergeometric Galois representations in dimensions two and three. Motivated by this framework, we construct two explicit families of eta-quotients, which we call the $\mathbb{K}_4$ and $\mathbb{K}_5$ functions, from the hypergeometric background. These $\mathbb{K}_4$ and $\mathbb{K}_5$ functions are constructed using the theory of weight $1/2$ Jacobi theta functions and their cubic analogues, respectively. Using these constructions, we then express the Fourier coefficients of certain Hecke eigenforms of weight two and four in terms of finite field period functions. As an application, we obtain new identities relating the Fourier coefficients of modular forms to special values of the finite field Appell series $F_1^p$ and $F_2^p$.
	\end{abstract}
	\maketitle
	\section{Introduction and statement of results}
		For a complex number $a$, the rising factorial $(a)_k$ is defined as $(a)_0:=1$ and $(a)_k:=a(a+1)\cdots (a+k-1)$ for $k\geq 1$. For a non-negative integer $n$, and $a_i, b_i\in\mathbb{C}$ with $b_i\notin\{\ldots, -3,-2,-1, 0\}$, the classical hypergeometric series ${_{n+1}}F_{n}$ is given by
	\begin{align*}
		{_{n+1}}F_{n}\left(\begin{array}{cccc}
			a_0, & a_1, & \ldots, & a_{n} \\
			& b_1, & \ldots, & b_n
		\end{array}\mid t
		\right):=\sum_{k=0}^{\infty}\frac{(a_0)_k\cdots (a_{n})_k}{(b_1)_k\cdots(b_n)_k}\cdot\frac{t^k}{k!},
	\end{align*}
	which converges absolutely for $|t|<1$. Classical hypergeometric series arise naturally as solutions to differential equations, and they appear in many areas of mathematics (see for example \cite{Andrews}).
	\par Let $p$ be an odd prime and $\mathbb{F}_q$ the finite field with $q = p^r$ elements, $r \geq 1$. Denote by $\widehat{\mathbb{F}_q^\times}$ the group of multiplicative characters of $\mathbb{F}_q^\times$, with trivial and quadratic characters $\varepsilon$ and $\varphi$, respectively. Finite field analogues of hypergeometric series, defined via Gauss and Jacobi sums, are commonly referred to as \emph{Gaussian hypergeometric series}. It seems that hypergeometric functions over a finite field first appeared in Koblitz's work \cite{kob}. There are other definitions of finite field hypergeometric functions. Here we make use of these functions, as developed by Greene \cite{greene} and Fuselier \emph{et al.} \cite{FL}.
	\par 
	For a multiplicative character $\chi$, we extend $\chi$ to $\mathbb{F}_q$ by setting $\chi(0):= 0$. For multiplicative characters $R_1$ and $R_2$, the binomial coefficient ${R_1 \choose R_2}$ is defined as
	\begin{align}\label{bino-def}
		{R_1 \choose R_2}:=\frac{R_2(-1)}{q}J(R_1,\overline{R_2})=\frac{R_2(-1)}{q}\sum_{x \in \mathbb{F}_q}R_1(x)\overline{R_2}(1-x),
	\end{align}
	where $J(\cdot, \cdot)$ denotes the usual Jacobi sum and $\overline{R_2}$ is the  character inverse of $R_2$. For multiplicative characters $R_1, R_2,\ldots, R_{n+1}$, $ Q_2,\ldots, Q_{n+1}$ and $z\in\mathbb{F}_q$, Greene's hypergeometric function is defined by
	\begin{align*}
		_{n+1}F_{n}\left(\begin{array}{cccc}
			R_1 & R_2&\ldots  & R_{n+1} \\
			& Q_2&\ldots& Q_{n+1}
		\end{array}|z
		\right)
		:= \frac{q}{q-1}\sum_{\chi \in \widehat{{\mathbb{F}_q^\times}}} {R_{1}\chi\choose\chi} {R_{2}\chi\choose Q_{2}\chi} \cdots {R_{n+1}\chi\choose Q_{n+1}\chi}\chi(z).
	\end{align*}  In a more recent study \cite{FL}, Fuselier \emph{et al.} developed a version of hypergeometric functions over finite fields, paralleling the approach taken in the classical setting by considering period functions of algebraic varieties over finite fields. Following \cite{FL}, for $z\in\mathbb{F}_q^\times$, define the period functions by
	$$
	_{1}\mathbb{P}_{0}[R_{1}|z] := \overline{R_1}(1-z),
	$$
	and for $n \geq 1$,
	\begin{align*}
		&_{n+1}\mathbb{P}_{n}\left[\begin{array}{cccc}
			R_1 & R_2&\cdots  & R_{n+1} \\
			& Q_2&\cdots& Q_{n+1}
		\end{array}|z
		\right]\notag \\
		&:= \frac{q^{n+1}}{q-1} \left(\prod_{i=2}^{n+1}R_{i}Q_{i}(-1) \right) \sum_{\chi \in \widehat{{\mathbb{F}_q^\times}}} {R_{1}\chi\choose\chi} {R_{2}\chi\choose Q_{2}\chi} \cdots {R_{n+1}\chi\choose Q_{n+1}\chi}\chi(z). 
	\end{align*} 
	With the normalization \eqref{bino-def}, the above definition of ${_{n+1}}\mathbb{P}_n$ series differs from its original definition given in \cite{FL} by a factor of $q^{n+1}$. The hypergeometric function defined by Greene and the period function defined by Fuselier \emph{et al.} are closely connected. For $z\in\mathbb{F}_q^\times$, we have
	\begin{align}\label{rel-gre-fu}
			&{_{n+1}}\mathbb{P}_{n}\left[\begin{array}{cccc}
				R_1 & R_2&\cdots  & R_{n+1} \\
				& Q_2&\cdots& Q_{n+1}
			\end{array}|z
			\right]\notag \\
			&=q^{n}\left(\prod_{i=2}^{n+1} R_{i} Q_{i}(-1) \right)\,{_{n+1}}F_{n}\left(\begin{array}{cccc}
				R_1 & R_2&\cdots  & R_{n+1} \\
				& Q_2&\cdots& Q_{n+1}
			\end{array}|z
			\right).
	\end{align}
    \par Throughout, for a positive integer $n$, we consider the multisets $\alpha = \{r_{1}, \ldots, r_{n}\}$ and $\beta = \{q_1=1, q_2, \ldots, q_{n}\}$ of rational numbers. 
    The collection $\mathrm{HD}:= \{\alpha, \beta\}$ is called a \emph{hypergeometric datum}. The integer $n$ is called the \emph{length} of $\mathrm{HD}$. The pair $(\alpha, \beta)$ is said to be \emph{primitive} if $r_{i}-q_{j} \notin \mathbb{Z}$ for all $1 \leq i,j \leq n$. Further, the least positive common denominator of $\mathrm{HD}$ is defined as $M:=\mathrm{lcd}(\alpha\cup\beta)$.
	To relate this data to finite fields, let $\mathbb{Z}[\zeta_M]$ be the cyclotomic ring, where $\zeta_M$ is a primitive $M$-th root of unity. For each nonzero prime ideal $\mathfrak{p}$ coprime to $M$ and integer $i$ we associate a multiplicative character to the finite residue field $\kappa_\mathfrak{p}:=\mathbb{Z}[\zeta_M]/\mathfrak{p}$ of size $q$ using the $M$-th residue symbol by 
	\begin{align*}
		\iota_\mathfrak{p}\left(\frac iM\right)(x) := \left( \frac {x}{ \mathfrak{p}}\right)_M^i\equiv x^{(q-1)\frac{i}M} \pmod {\mathfrak{p}}, \quad  \forall x\in \mathbb{Z}[\zeta_M].
	\end{align*} 
   For instance, when $\mathfrak{p}$ is coprime to $2$, the character $\iota_{\mathfrak{p}}(1/2)$ coincides with the quadratic character on the residue field. Likewise, $\iota_{\mathfrak{p}}(1)$ coincides with the trivial character. For simplicity, we write $R_i$ for $\iota_\mathfrak{p}(r_i)$, $\overline{R_i}$ for $\iota_\mathfrak{p}(-r_i)$, and define $Q_i$ and $\overline{Q_i}$ analogously. For a fixed $z\in\mathbb{F}_q^{\times}$ and $n\geq 1$, Fuselier \emph{et al.} \cite{FL} defined
   \begin{align}\label{P-definition2}
   \mathbb{P}(\text{HD};z;\mathfrak{p}) := {_{n}\mathbb{P}_{n-1}}\left[\begin{array}{cccc}
 	R_1 & R_2&\ldots  & R_{n} \\
 	& Q_2&\ldots& Q_{n}
   \end{array}|z
   \right].
   \end{align}
   When the length $n$ pair $(\alpha,\beta)$ is primitive, then the hypergeometric function $H_q$ is defined by \cite[(2.4)]{allen_EHMM1}
   \begin{align}\label{H_q-definition}
    H_q\left(\mathrm{HD};z;\mathfrak{p}\right):=\frac{\mathbb{P}(\text{HD};z;\mathfrak{p})}{\prod_{i=2}^{n}J_{\mathfrak{p}}(r_i,q_i-r_i)},
   \end{align}
   where $J_{\mathfrak{p}}(r_i,q_i-r_i):=J(\iota_{\mathfrak{p}}(r_i), \iota_{\mathfrak{p}}(q_i-r_i))$.
   Let $\mathbb{Q}_p$ denote the field of $p$-adic number. Let $\overline{\mathbb{Q}_p}$ be the algebraic closure of $\mathbb{Q}_p$ and $\mathbb{C}_p$ be the completion of $\overline{\mathbb{Q}_p}$. We will fix an embedding of $\mathbb{Q}(\zeta_M)$ to $\overline{\mathbb{C}_p}$ by taking $\iota_\mathfrak{p}\left(\frac{1}{q-1}\right):=\overline{\omega_q}$ to be the inverse of the Teichm\"{u}ller character $\omega_q$. Under this embedding, we write $\mathbb{P}(\mathrm{HD};z;\mathfrak{p})$ as $\mathbb{P}(\mathrm{HD};z;\overline{\omega_q})$. Likewise, we use $J_{\overline{\omega_q}}(r_i,q_i)$ for the embedding of $J_{\mathfrak{p}}(r_i,q_i)$.
  \par 
  Some of the principal motivations for studying Gaussian hypergeometric series arise from their deep connections with Fourier coefficients and eigenvalues of modular forms, as well as with point-counting on certain classes of algebraic varieties (see, for example, \cite{ahlgren_modularity, ahlgren, allen_modularity, BK, BK1, evans-mod, frechette, fuselier, Fuselier-McCarthy, koike, lennon, lennon2, mccarthy4, mc-papanikolas, mortenson, ono, salerno, vega}). A primary method for relating Fourier coefficients of modular forms and traces of Hecke operators to hypergeometric functions proceeds via the Eichler-Selberg trace formula, together with identities for $H_q$-functions and their connections to elliptic curves. However, this approach is effective only in a limited range of cases. For a broader overview of known results on hypergeometric modularity, we refer the reader to \cite{mccarthy_conjecture}.
  \par In the absence of a general framework for establishing hypergeometric modularity, Allen \emph{et al.} \cite{allen_EHMM2,allen_EHMM1} recently introduced the Explicit Hypergeometric Modularity Method (EHMM), which provides a systematic link between hypergeometric character sums and the Fourier coefficients of Hecke eigenforms. A key feature of the EHMM is the existence of an explicit formula for a Hecke eigenform $f_{\text{HD}}^{\sharp}$ in many instances when the datum $\text{HD}$ has length three or four with parameter $z= 1$. Moreover, the explicit nature of these hypergeometric realizations has led to a wealth of new identities and transformations for special values of $L$-functions associated to holomorphic modular forms \cite{allen_EHMM2,allen_EHMM1,rosen1,rosen2}. 
   \par
   This article concerns several new hypergeometric modularity results. To be specific, we construct two explicit families of eta-quotients, which we call the $\mathbb{K}_4$ and $\mathbb{K}_5$ functions, from the hypergeometric background. These $\mathbb{K}_4$ and $\mathbb{K}_5$ functions are constructed using the theory of weight $1/2$ Jacobi theta functions and their cubic analogues, respectively. See Section 3 for the constructions of  $\mathbb{K}_4$ and $\mathbb{K}_5$ functions. Using these constructions, we then prove identities between the $\mathbb{P}\left(\text{HD};z;\mathfrak{p}\right)$ function and the coefficients $a_{p}(f_{\text{HD}}^{\sharp})$ of a Hecke eigenform $f_{\mathrm{HD}}^{\sharp}$ at primes $p \equiv 1 \pmod{M}$. 
   Let us consider a particular form of hypergeometric datum HD of length four $$	\text{HD}_{\mathbb{K}_{4}\left(r\right)}:=\{\{1/2,1/2,1/2,r\},\{1,1,1,r+1/2\}\}.$$
   Our first result of this article expresses  the coefficients of weight four Hecke eigenforms in terms of $\mathbb{P}(\text{HD}_{\mathbb{K}_{4}(r)};z;\mathfrak{p})$ function.
   \begin{theorem}\label{main-thm-K4}
    Let $r \in \{1/2,1/3,1/4,1/6,1/8,1/12,1/24\}$. Let $f_{\mathrm{HD}_{\mathbb{K}_{4}\left(r\right)}}^{\sharp}$ be an explicit Hecke eigenform of weight four constructed from the EHMM. Then, for any prime ideal $\mathfrak{p}$ in $\mathbb{Z}[\zeta_{M}]$ lying above each fixed prime $p \equiv 1 \pmod{M:=\mathrm{lcd}{(1/2,r)}}$, we have
	$$\mathbb{P}\left(	\mathrm{HD}_{\mathbb{K}_{4}\left(r\right)};1;\mathfrak{p}\right)+\iota_{\mathfrak{p}}(1/2)(-1)p =-\psi(r)(\mathfrak{p})  \cdot a_{p}\left(f_{	\mathrm{HD}_{\mathbb{K}_{4}\left(r\right)}}^{\sharp}\right).$$
	The corresponding values of  $\psi{(r)}(\mathfrak{p})$ and $f_{	\mathrm{HD}_{\mathbb{K}_{4}\left(r\right)}}^{\sharp}$ are listed in Table~\ref{main-thm-table1}. Note the $L$-functions and modular forms database (LMFDB)  labels \cite{LMFDB} are used for all modular forms.
	\begin{table}[ht]
			\begin{center}
				\begin{tabular}{|c|c|c|c|}
					\hline
					\textrm{value of $r$} & $M$ & $\psi({r})(\mathfrak{p})$ & $f_{	\mathrm{HD}_{\mathbb{K}_{4}\left(r\right)}}^{\sharp}$  \\ \hline
					$1/2$ & $2$ & $\iota_{\mathfrak{p}}(1/2)(-1)$ & $f_{8.4.a.a}$\\
					\hline
					$1/3$ & $6$ & $1$ & $f_{9.4.a.a}$\\ 
					\hline
					$1/4$ & $4$ & $1$ & $f_{32.4.a.a},$~ $f_{32.4.a.c}$\\
					\hline
					$1/6$ & $6$ & $\iota_{\mathfrak{p}}(1/6)(-1)$ & $f_{72.4.a.a},$ ~$f_{72.4.a.d}$\\
					\hline
					$1/8$ & $8$ & $\iota_{\mathfrak{p}}(1/8)(-64)$ & $f_{128.4.b.e}$\\
					\hline
					$1/12$ & $12$ & $\iota_{\mathfrak{p}}(1/12)(-64)$ & $f_{288.4.a.l}$\\
					\hline
					$1/24$ & $24$ & $\iota_{\mathfrak{p}}(1/24)(-64)$ & $f_{1152.4.d.q}$\\
					\hline
				\end{tabular}
			\end{center}
			\caption{The $f_{\mathrm{HD}_{\mathbb{K}_{4}\left(r\right)}}^{\sharp}$ functions}\label{main-thm-table1}
	\end{table}
	\renewcommand{\arraystretch}{1}
	\end{theorem}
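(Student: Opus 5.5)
The plan is to follow the EHMM of Allen \emph{et al.} \cite{allen_EHMM1,allen_EHMM2}: write the length-four period function at $z=1$ as a twisted character sum over a length-three datum, and identify the resulting sum with the $p$-th Fourier coefficient of an explicit eta-quotient (the $\mathbb{K}_4$ function of Section 3).

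The first step is the finite field analogue of the Euler-type integral
\[
{}_{4}F_3\left(\begin{array}{cccc} a,& b,& c,& r\\ & 1,& 1,& r+1/2\end{array}\mid 1\right)=\frac{\Gamma(r+1/2)}{\Gamma(r)\Gamma(1/2)}\int_0^1 t^{r-1}(1-t)^{-1/2}\,{}_3F_2\left(\begin{array}{ccc} a,& b,& c\\ & 1,& 1\end{array}\mid t\right)dt .
\]
Concretely, I would expand the definition of ${}_4\mathbb{P}_3$, split off the factor $\binom{R\chi}{\overline{\varphi}R\chi}$ using \eqref{bino-def}, and exchange sums. This should give
\[
\mathbb{P}\bigl(\mathrm{HD}_{\mathbb{K}_4(r)};1;\mathfrak{p}\bigr)=\sum_{t\in\mathbb{F}_p}R(t)\,\varphi(1-t)\,\mathbb{P}\bigl(\{\tfrac12,\tfrac12,\tfrac12\},\{1,1,1\};t;\mathfrak{p}\bigr)
\]
up to an explicit sign $R\varphi(-1)$, and one should watch for degenerate terms (those with $\chi$ trivial or $\chi=\overline{R}$). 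The correction term $\iota_{\mathfrak{p}}(1/2)(-1)p$ on the left of the theorem should come from exactly such degenerate characters.

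The second step uses the classical modular parametrization of the inner ${}_3\mathbb{P}_2$. Via the Clausen-type identity ${}_3\mathbb{P}_2[\varphi,\varphi,\varphi;\varepsilon,\varepsilon\mid t]\approx {}_2\mathbb{P}_1[\varphi,\varphi;\varepsilon\mid \cdot]^2-p$, it is tied to the Legendre family. On the modular side, $t=\lambda(\tau)=\theta_2^4/\theta_3^4$ has ${}_2F_1(\tfrac12,\tfrac12;1;\lambda)=\theta_3^2$, so the integrand $t^{r-1}(1-t)^{-1/2}\,{}_3F_2(t)\,dt$ pulls back to a weight-four form:
\[
\lambda^{r}(1-\lambda)^{1/2}\,\theta_3^{4}\,\frac{d\lambda}{\lambda\, d\tau}\;\sim\;\theta_2^{8r}\,\theta_3^{4-8r}\,\theta_4^{2}\cdot\theta_3^{4}\cdot\theta_4^{4}\cdots .
\]
After the usual normalization this is the eta-quotient $\mathbb{K}_4(r)(\tau)$ of Section 3, which I take to be defined precisely so that its weight is four and its level $N$ is a power of $2$ times a power of $3$. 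I would then verify, for each listed $r$, that $\mathbb{K}_4(r)$ (rescaled by $\tau\mapsto 24\tau$, etc.) is holomorphic on $\Gamma_0(N)$ with the listed character. I would also check that it is a Hecke eigenform equal to $f^\sharp$, or a linear combination of the two listed forms when two appear. For this I would compute the dimension of the relevant newform space and apply the Sturm bound to compare $q$-expansions with the LMFDB forms.

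The third and main step is the comparison of $p$-th coefficients. Following the EHMM, the Galois representation attached to the character sum $\mathbb{P}(\mathrm{HD};1;\mathfrak{p})$ is realized (through Katz's hypergeometric sheaves and the Beukers--Cohen--Mellit framework) as a two-dimensional piece of the cohomology of the variety cut out by the twisted integral. Its traces at $\mathrm{Frob}_{\mathfrak{p}}$ agree, up to the Hecke character twist $\psi(r)$, with the $p$-th coefficients of $\mathbb{K}_4(r)$. Lifting to characteristic zero via $\overline{\omega_q}$ and the Gross--Koblitz formula, the $p$-adic comparison reduces both sides to the same truncated hypergeometric sum modulo $p$. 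By Deligne's bound and the weight, this congruence forces equality for $p$ large, and the small primes are checked numerically.

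The hardest part will be pinning down $\psi(r)(\mathfrak{p})$, especially values such as $\iota_{\mathfrak{p}}(1/8)(-64)$: the factor $64$ comes from normalizing $\theta_2=2q^{1/8}(\cdots)$, and the residue-symbol sign comes from the $-1$'s in the binomial conventions. For these I would track the constants explicitly through the first two steps rather than rely on the congruence argument.
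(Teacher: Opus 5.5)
Your overall plan is the right one: pull back the Euler integral along a Hauptmodul to get the $\mathbb{K}_4$ eta-quotient, compare $p$-adically, then finish with an archimedean bound. The decisive third step, however, does not close as written. In weight four both sides have size about $p^{3/2}$. Deligne gives $|a_p(f^\sharp_{\mathrm{HD}_{\mathbb{K}_4(r)}})|\le 2p^{3/2}$, and Theorem~\ref{thm:Katz} gives $|\mathbb{P}(\mathrm{HD}_{\mathbb{K}_4(r)};1;\mathfrak{p})|\le 3p^{3/2}$ in every embedding. Set $U:=\mathbb{P}(\mathrm{HD}_{\mathbb{K}_4(r)};1;\mathfrak{p})+\iota_{\mathfrak{p}}(1/2)(-1)p+\psi(r)(\mathfrak{p})\,a_p(f^\sharp_{\mathrm{HD}_{\mathbb{K}_4(r)}})$. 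If $U$ is only known to vanish modulo $p$, then $U/p$ is an algebraic integer of size $O(\sqrt{p})$, and nothing forces it to be $0$. So reducing both sides via Gross--Koblitz to a truncated hypergeometric sum modulo $p$ is not enough. The paper needs $U\equiv 0\pmod{p^2}$. It gets this from the CFGL comparison (Proposition~\ref{CFGL-isom}) of the two expansions of $\mathbb{K}_4(r)$, combined with the supercongruence \cite[Theorem 2.3]{allen_EHMM1}. Only then does $|U|\le 5p^{3/2}+p<p^2$ force $U=0$ for $p\ge 29$.

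A second ingredient is also missing. $U$ lies in $\mathbb{Z}[\zeta_M]$, and the $p$-adic argument (with $\iota_{\mathfrak{p}}(\frac{1}{p-1})=\overline{\omega_p}$) gives the congruence only at the single prime $\mathfrak{p}$. To conclude that $p^2$ divides every integral-basis coordinate of $U$, and hence to get a lower bound on its norm, you need the congruence under every embedding $\sigma_k$, $k\in(\mathbb{Z}/M\mathbb{Z})^\times$. The paper obtains this in two ways:
\begin{itemize}
\item for $r=1/8,1/12,1/24$, from the fact that these $\mathbb{K}_4$ families are Galois;
\item for $r=1/2,1/3,1/4,1/6$, by verifying hypothesis (2) of Theorem~\ref{thm-EHMM}, namely $\iota_{\mathfrak{p}}(r)(-64)^{-1}\,\mathbb{P}(\mathrm{HD}_{\mathbb{K}_4(r)};1;\mathfrak{p})\in\mathbb{Z}$, via Lemma~\ref{trans_P_function} and the extendability criterion Proposition~\ref{extendableprop}.
\end{itemize}
Your plan has no counterpart to either step. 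Also, your sentence that the Katz traces ``agree, up to $\psi(r)$, with the $p$-th coefficients'' restates the theorem rather than proving it.

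Two smaller inaccuracies:
\begin{itemize}
\item ${}_3F_2(\tfrac12,\tfrac12,\tfrac12;1,1;\lambda)\neq\theta_3^4$. Clausen puts the argument at $4\lambda(1-\lambda)$; the paper instead uses the $\Gamma_0(2)$ Hauptmodul $t_2$, with ${}_3F_2(t_2)=\theta_4(2\tau)^4$.
\item For $r=1/8,1/12,1/24$, the single form $\mathbb{K}_4(r)(N\tau)$ is not an eigenform; the eigenform is the combination in Table~\ref{K4table}. So your Sturm-bound identification would fail there. What is true, and all you need, is that its $p$-th coefficients agree with those of $f^\sharp$ for $p\equiv 1\pmod M$.
\end{itemize}
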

	\begin{remark}
	The cases $r=1/2$ and $r=1/4$ of Theorem~\ref{main-thm-K4} were obtained by Allen \emph{et al.} \cite{allen_EHMM1}.
	\end{remark}
	Next we consider another particular form of hypergeometric datum HD of length two $$	\text{HD}_{\mathbb{K}_{5}\left(r\right)}:=\{\{1/3,r\},\{1,1\}\}.$$
	We express the coefficients of weight two Hecke eigenforms in terms of $\mathbb{P}\left(\text{HD}_{\mathbb{K}_{5}(r)};z;\mathfrak{p}\right)$ functions in the following theorem.
	\begin{theorem}\label{main-thm-K5}
	Let $r\in\{1/2,1/3,1/6,1/12\}$. Let $f_{\mathrm{HD}_{\mathbb{K}_{5}\left(r\right)}}^{\sharp}$ be a Hecke eigenform of weight two  constructed from the EHMM. Then, for any prime ideal $\mathfrak{p}$ in $\mathbb{Z}[\zeta_{M^\prime}]$ lying above each fixed prime  $p\equiv 1 \pmod {M^\prime}$, we have
	\begin{align}\label{main-thm-K5-eq1}
	-a_{p}\left(f_{	\mathrm{HD}_{\mathbb{K}_{5}\left(r\right)}}^{\sharp}\right)=\iota_{\mathfrak{p}}(r)(27)\mathbb{P}\left(\mathrm{HD}_{\mathbb{K}_{5}\left(r\right)};1;\mathfrak{p}\right)+\iota_{\mathfrak{p}}(1-r)(27)\mathbb{P}\left(\overline{\mathrm{HD}_{\mathbb{K}_{5}\left(r\right)}};1;\mathfrak{p}\right).
	\end{align}
	Here, $M^\prime$ denotes the least common denominator of $1/3$ and $r$, and $\overline{\mathrm{HD}_{\mathbb{K}_{5}\left(r\right)}}:=\{\{2/3,1-r\};\{1,1\}\}$, while the corresponding values of $f_{	\mathrm{HD}_{\mathbb{K}_{5}\left(r\right)}}^{\sharp}$  are listed in Table~\ref{main-thm-table2}.
	\begin{table}[ht]
				\begin{center}
					\begin{tabular}{|c|c|c|}
						\hline
						\textrm{values of $r$} & $M^\prime$  & $f_{	\mathrm{HD}_{\mathbb{K}_{5}\left(r\right)}}^{\sharp}$  \\ \hline
						$1/2$, $1/6$ & $6$ &  $f_{36.2.a.a}$\\
						\hline
						$1/3$ & $3$ &  $f_{27.2.a.a}$\\ 
						\hline
						$1/12$ & $12$ &  $f_{432.2.c.a}$\\
						\hline
					\end{tabular}
				\end{center}
				\caption{The $f_{	\mathrm{HD}_{\mathbb{K}_{5}\left(r\right)}}^{\sharp}$ functions}\label{main-thm-table2}
	\end{table}
	\renewcommand{\arraystretch}{1}
	\end{theorem}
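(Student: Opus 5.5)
The plan is to reduce each of the four cases of Theorem~\ref{main-thm-K5} to point counts on an elliptic curve with complex multiplication, and then compare with the Fourier coefficients of the corresponding CM newform.

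\textbf{Step 1: Turn the period functions into character sums.} For length two the period function is, up to the sign factor $R_2(-1)$, essentially a Jacobi-sum integral. Expanding the binomial coefficients \eqref{bino-def} and summing over $\chi$ by orthogonality gives
\begin{align*}
\mathbb{P}\left(\mathrm{HD}_{\mathbb{K}_{5}(r)};z;\mathfrak{p}\right)= R(-1)\sum_{y\in\mathbb{F}_p} A(y)\,\overline{R}A(1-y)\,\overline{R}(1-zy),
\end{align*}
where $A=\iota_\mathfrak{p}(1/3)$ and $R=\iota_\mathfrak{p}(r)$. The conjugate datum $\overline{\mathrm{HD}_{\mathbb{K}_{5}(r)}}$ gives the complex-conjugate sum. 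The twists $\iota_\mathfrak{p}(r)(27)$ and $\iota_\mathfrak{p}(1-r)(27)=\overline{\iota_\mathfrak{p}(r)(27)}$ therefore make the right-hand side of \eqref{main-thm-K5-eq1} equal to $2\,\mathrm{Re}$ of a single twisted sum. This matches the fact that $a_p$ is real.

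\textbf{Step 2: Identify a CM curve.} The twisted sum should count points on a curve. Concretely, I would read off from the $\mathbb{K}_5$ construction in Section~3 a Hesse- or cubic-type curve whose period is the classical ${}_2F_1(1/3,r;1;\cdot)$ at the CM point $z=1$. Examples are $y^2=x^3+1$-type twists for $r=1/2,1/6$, the Fermat cubic $x^3+y^3=1$ for $r=1/3$, and a sextic or quartic twist for $r=1/12$.

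The link to point counts comes from a change of variables in the sum of Step 1 (for instance $y\mapsto$ a cube or sixth power), together with the identity $\sum_{\chi^k=\varepsilon}\chi(u)=\#\{t:t^k=u\}$. This expresses $\#E(\mathbb{F}_p)$ as $p+1$ plus a combination of the two conjugate sums. Alternatively, one can evaluate both sides directly: $J(\iota_\mathfrak{p}(a),\iota_\mathfrak{p}(b))$ is a Jacobi sum attached to a Hecke character of $\mathbb{Q}(\zeta_3)$ or $\mathbb{Q}(i)$. The factor $\iota_\mathfrak{p}(r)(27)$ is exactly the normalization that appears in the classical evaluation of $J(\chi_3,\chi_3)$ and of cubic Gauss sums. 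That evaluation says the relevant Jacobi sum is the primary generator $\pi\equiv 2\pmod 3$ of $\mathfrak{p}$.

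\textbf{Step 3: Compare with the newform.} The forms $f_{27.2.a.a}$, $f_{36.2.a.a}$ and $f_{432.2.c.a}$ are CM eta-quotient/Hecke-character forms, and the $\mathbb{K}_5$ eta-quotients of Section~3 realize them. For $p\equiv1\pmod{M'}$ we then have $a_p=\pi+\bar\pi$, where $\pi$ is the appropriately normalized (primary, twisted by a sextic or twelfth residue character) generator. Matching $-a_p$ with $2\,\mathrm{Re}$ of the twisted sum from Step 1 finishes the proof. For $r=1/2$ and $r=1/6$ the same form arises, which should follow from a quadratic or Pfaff-type transformation relating the two sums.

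\textbf{Main obstacle.} I expect the difficulty to be the precise sign and root-of-unity normalization. One must show that the twist by $\iota_\mathfrak{p}(r)(27)$ exactly cancels the ambiguity in the unit, which is sixth or twelfth roots of unity, so the sum lands on the primary generator and not a unit multiple of it. I would first settle this by computing the relevant Jacobi sums modulo $3$ (or modulo $(1-\zeta_{12})$) via Stickelberger-type congruences, $J\equiv -1$ up to known factors. I would then check small primes numerically, e.g.\ $p=7,13,37$, against the LMFDB coefficients.
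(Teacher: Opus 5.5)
Your route, evaluating both sides exactly through Jacobi sums and CM theory, is genuinely different from the paper's. After one correction it settles $r\in\{1/3,1/2,1/6\}$ outright, but it leaves a real hole at $r=1/12$.

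The correction is in Step~1. The kernel should be $R(-1)\sum_{y}A(y)\overline{A}(1-y)\overline{R}(1-zy)$, which at $z=1$ equals $R(-1)J(A,R)$. Your integrand gives $R(-1)J(A,A\overline{R}^{2})$ there; for $R=\varphi$ that is $J(A,A)$ instead of $J(A,\varphi)$. So, with $\chi_3:=\iota_{\mathfrak p}(1/3)$ and $\chi_6:=\iota_{\mathfrak p}(1/6)$, the right side of \eqref{main-thm-K5-eq1} is $2\,\mathrm{Re}\bigl(\iota_{\mathfrak p}(r)(-27)\,J(\chi_3,\iota_{\mathfrak p}(r))\bigr)$.

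For $r=1/3,1/2,1/6$ the twist is $\chi_3(-27)=1$, $\varphi(-27)=1$, and $\chi_6(-27)=\varphi(-3)=1$ respectively. Moreover $J(\chi_3,\chi_6)=J(\chi_3,\varphi)$, because $g(\chi_6)g(\overline{\chi_6})=\chi_6(-1)p=g(\varphi)^2$; no Pfaff-type transformation is needed. The claims therefore reduce to $a_p=-2\,\mathrm{Re}\,J(\chi_3,\chi_3)$ for $x^3+y^3=1$ (class 27a) and $a_p=-2\,\mathrm{Re}\,J(\chi_3,\varphi)$ for $y^2=x^3+1$ (class 36a). These are elementary point counts, so the unit normalization you single out as the main obstacle never arises.

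The hole is $r=1/12$. Put $\chi:=\iota_{\mathfrak p}(1/12)$. Use Hasse--Davenport in the form $g(\psi)g(\psi\varphi)=\overline{\psi}(4)g(\psi^{2})g(\varphi)$, together with $J(\psi,\varphi)=\psi(4)J(\psi,\psi)$. This gives $\chi(-27)J(\chi_3,\chi)=\iota_{\mathfrak p}(1/4)(3)\,\varphi(2)\,J(\chi_3,\chi_3)$. So for $r=1/12$ the theorem is equivalent to $a_p(f_{432.2.c.a})=(3/p)_4\,(2/p)\,a_p(f_{27.2.a.a})$ for $p\equiv 1\pmod{12}$, where $(3/p)_4:=\iota_{\mathfrak p}(1/4)(3)=\pm1$. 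For example, at $p=13$ this reads $-5=1\cdot(-1)\cdot 5$.

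Since $(3/p)_4$ is not determined by a congruence condition on $p$, this cannot be a point count on $x^3+y^3=1$, on $y^2=x^3+D$, or on any curve over $\mathbb{Q}$; no sextic or quartic twist will do. It is the twist over $\mathbb{Q}(\sqrt{-3})$ of the Hecke character of 27a by $\left(\frac{\sqrt{-3}}{\cdot}\right)_2$, consistent with the nebentypus $(12/\cdot)$. You would have to prove that $\mathbb{K}_5(1/12)(12\tau)-3\sqrt{-3}\,\mathbb{K}_5(7/12)(12\tau)$ is the theta series of that Hecke character, for instance by comparing coefficients up to the Sturm bound. Your Step~3 only asserts that the eta-quotients realize the CM forms.

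This identification is exactly what the paper's argument avoids. The paper gets $U_r\equiv 0\pmod p$ from the formal-group isomorphism (Proposition~\ref{CFGL-isom}) between the $t^{1/e}$- and $q$-expansions of $\mathbb{K}_5(r)$, together with Gross--Koblitz. It then spreads this over all embeddings of $\mathbb{Z}[\zeta_{M'}]$, concludes $U_r=0$ from $|U_r|\le 4\sqrt p<p$, and checks small $p$ by computer. That method is uniform in $r$ and needs no CM; it is the same template as the weight-four $\mathbb{K}_4$ case. Yours, once the $r=1/12$ identification is supplied, gives exact identities for every $p$ and explains why $r=1/2$ and $r=1/6$ land on the same form.
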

	\subsection{Appell series over finite fields and modular forms}
	Products of $_2F_1$ classical hypergeometric series give rise to double series, among which the Appell functions $F_1$, $F_2$, $F_3$, and $F_4$ are the most prominent (see, for example, \cite{Andrews}). Finite field analogues of these functions have been studied by several authors (see, for example, \cite{He-et-al, TB-2, TSB}). Let $R_1, \ldots, R_5$ be multiplicative characters on $\mathbb{F}_q$,  and let $x,y\in \mathbb{F}_q$. Motivated by the integral representations of $F_1$ and $F_2$, He \emph{et al.} \cite{He-et-al} introduced finite field analogues of $F_1$ and $F_2$ as follows.
	\begin{align*}
		&F_1^q(R_1; R_2, R_3; R_4; x, y)\\
		&=\varepsilon(xy)R_1R_4(-1)\sum_{u\in \mathbb{F}_q}R_1(u)\overline{R_1}R_4(1-u)\overline{R_2}(1-ux)\overline{R_3}(1-uy),\\
		&F_2^q(R_1; R_2, R_3; R_4, R_5; x, y)\\
		&=\varepsilon(xy)R_2R_3R_4R_5(-1)\sum_{u, v\in \mathbb{F}_q}R_2(u)R_3(v)\overline{R_2}R_4(1-u)\overline{R_3}R_5(1-v)\overline{R_1}(1-ux-vy).
	\end{align*}
	Similar to \eqref{P-definition2}, for any rational numbers $r_1,r_2,r_3,r_4$, and $r_5$, we define
	\begin{align*}
		&F_1^q(r_1; r_2, r_3; r_4; x, y;\mathfrak{p}):=F_1^q(R_1; R_2, R_3; R_4; x, y),\\
		&F_2^q(r_1; r_2, r_3; r_4, r_5; x, y;\mathfrak{p}):=F_2^q(R_1; R_2, R_3; R_4, R_5; x, y),
	\end{align*}
	for each nonzero prime ideal $\mathfrak{p}$ in $\mathbb{Z}[\zeta_M]$ coprime to $M$, where $M$ is the least positive common denominator of $r_i$, and $R_i:=\iota_{\mathfrak{p}}(r_i)$ for $1\leq i\leq 5$.
	It is known that finite field analogues of the Appell series $F_4$ are related to modular forms (see, for example, \cite{MT}). To the best of our knowledge, no such results exist for the finite field analogues of the other Appell series. As an application of Theorem~\ref{main-thm-K5}, we establish explicit relations between the Fourier coefficients of weight two Hecke eigenforms and special values of the Appell series $F_1^p$ and $F_2^p$.
	\begin{theorem}\label{rel-appell-modu-1}
	Let $r\in\{1/2,1/3,1/6,1/12\}$. Let $f_{\mathrm{HD}_{\mathbb{K}_{5}\left(r\right)}}^{\sharp}$ be a Hecke eigenform of weight 2  constructed from the EHMM. Then, for any prime ideal $\mathfrak{p}$ in $\mathbb{Z}[\zeta_{M^\prime}]$ lying above each fixed prime  $p\equiv 1 \pmod {M^\prime}$, we have
	\begin{align*}
\iota_{\mathfrak{p}}(r)(-27)F_1^p\left(r;1/6,1/6;1;1,1;\mathfrak{p}\right)+\iota_{\mathfrak{p}}(1-r)(-27)F_1^p\left(1-r;1/3,1/3;1;1,1;\mathfrak{p}\right)\\	=-a_{p}\left(f_{	\mathrm{HD}_{\mathbb{K}_{5}\left(r\right)}}^{\sharp}\right).
	\end{align*}
	Here, $M^\prime =6$ for $r\in\{1/2,1/3,1/6\}$ and $M^\prime=12$ for $r=1/12$. The corresponding values of $f_{\mathrm{HD}_{\mathbb{K}_{5}\left(r\right)}}^{\sharp}$  are listed in Table~\ref{main-thm-table2}.
	\end{theorem}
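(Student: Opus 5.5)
Theorem~\ref{rel-appell-modu-1} should follow from Theorem~\ref{main-thm-K5}: we show that each Appell value on the left equals, up to a character factor, the corresponding period function $\mathbb{P}(\mathrm{HD}_{\mathbb{K}_5(r)};1;\mathfrak{p})$ or $\mathbb{P}(\overline{\mathrm{HD}_{\mathbb{K}_5(r)}};1;\mathfrak{p})$ appearing in \eqref{main-thm-K5-eq1}. Work over $\mathbb{F}_p$ with $R_1=\iota_{\mathfrak{p}}(r)$ and $R_2=R_3=\iota_{\mathfrak{p}}(1/6)=:\chi_6$, and take $R_4=\varepsilon$, since $\iota_{\mathfrak{p}}(1)$ is trivial. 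At $x=y=1$ the definition gives
\begin{align*}
F_1^p(r;1/6,1/6;1;1,1;\mathfrak{p})=R_1(-1)\sum_{u}R_1(u)\overline{R_1}(1-u)\,\overline{\chi_6}^{\,2}(1-u).
\end{align*}
The point is that $\overline{\chi_6}^2=\overline{\chi_3}$ with $\chi_3=\iota_{\mathfrak{p}}(1/3)$, so the two factors at $1-u$ merge into the single character $\overline{R_1\chi_3}(1-u)$. Treating the finitely many terms $u=0,1$, where the convention $\chi(0)=0$ applies, with care, we obtain
\begin{align*}
F_1^p=R_1(-1)\,J(R_1,\overline{R_1\chi_3}).
\end{align*}
The same computation for the second term gives $\overline{R_1}(-1)\,J(\overline{R_1},R_1\chi_3)$, because $\iota_{\mathfrak{p}}(1/3)^{2}=\iota_{\mathfrak{p}}(2/3)=\overline{\chi_3}$.

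Next, evaluate $\mathbb{P}(\mathrm{HD}_{\mathbb{K}_5(r)};1;\mathfrak{p})={}_2\mathbb{P}_1[\chi_3,R_1;\varepsilon\,|\,1]$ in closed form. For this we use the finite field Gauss summation theorem (Greene), which gives ${}_2F_1(A,B;C\,|\,1)=A(-1)\binom{B}{\overline{A}C}$. Translating through \eqref{rel-gre-fu}, we obtain $\mathbb{P}$ as $p\,R_1(-1)$ times a binomial coefficient, and this binomial is again a Jacobi sum of the form $J(R_1,\chi_3)$ up to a sign $\chi_3(-1)$. We then relate $J(R_1,\overline{R_1\chi_3})$ to $J(R_1,\chi_3)$ using the standard identity $J(A,B)=A(-1)J(A,\overline{AB})$, valid for $AB\ne\varepsilon$. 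Since $r\in\{1/2,1/3,1/6,1/12\}$, the product $R_1\chi_3$ is nontrivial, and so is $R_1$, so there are no degenerate cases.

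After this step, each Appell value equals $c\cdot\mathbb{P}$ for an explicit constant $c$ built from $p$, $R_1(-1)$ and $\chi_3(-1)$. The prefactor $\iota_{\mathfrak{p}}(r)(-27)=R_1(-1)\,\iota_{\mathfrak{p}}(r)(27)$ in the statement then absorbs these signs and produces exactly the coefficient $\iota_{\mathfrak{p}}(r)(27)$ in \eqref{main-thm-K5-eq1}. Substituting into Theorem~\ref{main-thm-K5} finishes the proof.

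The main obstacle is the bookkeeping of normalizations: the factor $p^{n}$ in \eqref{rel-gre-fu}, the convention $\binom{A}{B}=\frac{B(-1)}{p}J(A,\overline B)$, and the $\varepsilon(xy)$ and $R_1R_4(-1)$ factors in $F_1^p$. All of these must combine so that the final constant is exactly $1$ and not $\pm p$ or another sign. If it does not come out to $1$, I would instead expand $F_1^p$ via Greene's character-sum representation of ${}_2F_1$ at argument $1$ directly, bypassing the Gauss-sum evaluation, and compare the two expansions term by term.
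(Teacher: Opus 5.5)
Your proposal is correct, and the constant you hedge on does come out right. The $p$ from \eqref{rel-gre-fu} cancels the $1/p$ in the binomial coefficient, and $\chi_3(-1)=1$. Together with the reflection, this gives $\mathbb{P}(\mathrm{HD}_{\mathbb{K}_5(r)};1;\mathfrak p)=J(R_1,\overline{R_1\chi_3})$ and $\mathbb{P}(\overline{\mathrm{HD}_{\mathbb{K}_5(r)}};1;\mathfrak p)=J(\overline{R_1},R_1\chi_3)$. Comparing with your merged sums, $F_1^p=\iota_{\mathfrak p}(a)(-1)\,\mathbb{P}$ in both cases, where $a$ is the first Appell parameter. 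Then $\iota_{\mathfrak p}(a)(-27)\,\iota_{\mathfrak p}(a)(-1)=\iota_{\mathfrak p}(a)(27)$ yields exactly the coefficients in \eqref{main-thm-K5-eq1}.

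The paper obtains this identity in one line. It cites the Li--Li reduction $F_1^p(a;b,b';c;x,x;\mathfrak p)=p\,{}_2F_1(\iota_{\mathfrak p}(b+b'),\iota_{\mathfrak p}(a);\iota_{\mathfrak p}(c)\,|\,x)$ and then applies \eqref{rel-gre-fu}. That reduction is just your character-merging step carried out for general $x$, with the merged sum recognized as Greene's integral representation of ${}_2F_1$; this is exactly your fallback plan. So your detour through Greene's Gauss summation and the reflection $J(A,B)=A(-1)J(A,\overline{AB})$ is not needed for the theorem.

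What your route buys is a self-contained proof with explicit closed forms. For example, $F_1^p(r;1/6,1/6;1;1,1;\mathfrak p)=J(\iota_{\mathfrak p}(r),\iota_{\mathfrak p}(1/3))$. Also, $\mathbb{P}(\mathrm{HD}_{\mathbb{K}_5(r)};1;\mathfrak p)=J_{\mathfrak p}(r,2/3-r)$ is precisely the Jacobi sum appearing in the proof of Theorem~\ref{main-thm-K5}. The paper's citation is shorter and holds for arbitrary parameters and arguments $x$.
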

	\begin{theorem}\label{rel-appell-modu-2}
	Let $c^\prime\in\mathbb{Q}$ and $r\in\{1/2,1/3,1/6,1/12\}$. Let $f_{\mathrm{HD}_{\mathbb{K}_{5}\left(r\right)}}^{\sharp}$ be a Hecke eigenform of weight 2  constructed from the EHMM.  Then, for any prime ideal $\mathfrak{p}$ in $\mathbb{Z}[\zeta_{M^\prime}]$ lying above each fixed prime  $p\equiv 1 \pmod {M^\prime:=\mathrm{lcd}\left(1/3,r, c^\prime\right)}$, we have
	\begin{align*}
	a_{p}\left(f_{	\mathrm{HD}_{\mathbb{K}_{5}\left(r\right)}}^{\sharp}\right)&=\iota_{\mathfrak{p}}(r)(-27)\iota_{\mathfrak{p}}(c^\prime)(-1)F_2^p\left(1/3;r,1;1,c^\prime;1,1;\mathfrak{p}\right)+X\\&+\iota_{\mathfrak{p}}(1-r)(-27)\iota_{\mathfrak{p}}(c^\prime)(-1)F_2^p\left(2/3;1-r,1;1,c^\prime;1,1;\mathfrak{p}\right),
	\end{align*}
	where 
	\begin{align*}
	X:=-\iota_{\mathfrak{p}}(r)(-27)J_{{\mathfrak{p}}}(c^\prime,1/3-c^\prime)J_{\mathfrak{p}}(r+c^\prime-1/3,1/3-c^\prime)\\-\iota_{\mathfrak{p}}(1-r)(-27)J_{{\mathfrak{p}}}(c^\prime,2/3-c^\prime)J_{\mathfrak{p}}(1/3-r+c^\prime,2/3-c^\prime),
	\end{align*}
	and the corresponding values of $f_{	\mathrm{HD}_{\mathbb{K}_{5}\left(r\right)}}^{\sharp}$  are listed in Table~\ref{main-thm-table2}.
	\end{theorem}
    \section{Preliminaries}
	We begin by recalling  an alternate perspective on classical hypergeometric series in the form of the integral representation of Euler.
	\subsection{Euler's integral representation}
	Let $n$ be a non-negative integer and  $a_i, b_i\in\mathbb{C}$ such that $b_i\notin\{\ldots, -3,-2,-1, 0\}$. Then, there is an inductive formula to construct hypergeometric series, see \cite[(2.2.2)]{Andrews}. Namely, when $\text{Re}(b_{n})>\text{Re}(a_{n+1})>0$,
	\begin{align}\label{euler}
			{_{n+1}}F_{n}\left(\begin{array}{cccc}
				a_1 & a_2&\ldots  & a_{n+1} \\
				& b_1&\ldots& b_{n}
	\end{array}|z
	\right)= \frac{\Gamma(b_{n})}{\Gamma(a_{n+1})\Gamma(b_{n}-a_{n+1})}\notag\\
	\times \int_0^1 t^{a_{n+1}}(1-t)^{b_{n}-a_{n+1}-1}  {_{n}F_{n-1}}\left(\begin{array}{cccc}
		a_1 & a_2&\ldots  & a_{n} \\
		& b_1&\ldots& b_{n-1}
	\end{array}|zt
	\right)\frac{dt}{t},
	\end{align}
	where $\Gamma(z)$ is the usual gamma function. This integral representation plays a key role  for associating a modular form to a hypergeometric datum.
		\par Next, we recall  the Gross-Koblitz formula which relates Gauss sums to the $p$-adic gamma function.
	\subsection{$p$-Adic preliminaries and Gross-Koblitz formula}
	Let  $\mathbb{Q}_p$ denote the field of $p$-adic numbers.
	Let $\overline{\mathbb{Q}_p}$ be the algebraic closure of $\mathbb{Q}_p$ and $\mathbb{C}_p$ be the completion of $\overline{\mathbb{Q}_p}$.
	Let $\mathbb{Z}_q$ be the ring of integers in the unique unramified extension of $\mathbb{Q}_p$ with residue field $\mathbb{F}_q$.
	We know that $\chi\in \widehat{\mathbb{F}_q^{\times}}$ takes values in $\mu_{q-1}$, where $\mu_{q-1}$ is the group of all the $(q-1)$-th roots of unity in $\mathbb{C}^{\times}$. Since $\mathbb{Z}_q^{\times}$ contains all the $(q-1)$-th roots of unity,
	we can consider multiplicative characters on $\mathbb{F}_q^\times$
	to be maps $\chi: \mathbb{F}_q^{\times} \rightarrow \mathbb{Z}_q^{\times}$.
	Let $\omega_q: \mathbb{F}_q^\times \rightarrow \mathbb{Z}_q^{\times}$ be the Teichm\"{u}ller character.
	For $a\in\mathbb{F}_q^\times$, the value $\omega_q(a)$ is just the $(q-1)$-th root of unity in $\mathbb{Z}_q$ such that $\omega_q(a)\equiv a \pmod{p}$.
	\par Now, we introduce the Gauss sum. Let $\zeta_p$ be a fixed primitive $p$-th root of unity
	in $\overline{\mathbb{Q}_p}$. The trace map $\text{tr}: \mathbb{F}_q \rightarrow \mathbb{F}_p$ is given by
	\begin{align}
		\text{tr}(\alpha)=\alpha + \alpha^p + \alpha^{p^2}+ \cdots + \alpha^{p^{r-1}}.\notag
	\end{align}
	Then the additive character
	$\theta: \mathbb{F}_q \rightarrow \mathbb{Q}_p(\zeta_p)$ is defined by
	\begin{align}
		\theta(\alpha)=\zeta_p^{\text{tr}(\alpha)}.\notag
	\end{align}
	For $\chi \in \widehat{\mathbb{F}_q^\times}$, the \emph{Gauss sum} is defined by
	\begin{align}
		g(\chi):=\sum\limits_{x\in \mathbb{F}_q}\chi(x)\theta(x) .\notag
	\end{align}
	For $A, B\in\widehat{\mathbb{F}_q^{\times}}$ such that $AB\neq\varepsilon$. Then 
	\begin{align*}
		J(A, B)=\frac{g(A)g(B)}{g(AB)}.
	\end{align*}
	Now, we recall the $p$-adic gamma function. For a positive integer $n$, the $p$-adic gamma function $\Gamma_p(n)$ is defined as
	\begin{align}
		\Gamma_p(n):=(-1)^n\prod\limits_{0<j<n,p\nmid j}j\notag
	\end{align}
	and one extends it to all $x\in\mathbb{Z}_p$ by setting $\Gamma_p(0):=1$ and
	\begin{align}
		\Gamma_p(x):=\lim_{x_n\rightarrow x}\Gamma_p(x_n)\notag
	\end{align}
	for $x\neq0$, where $x_n$ runs through any sequence of positive integers $p$-adically approaching $x$.
	This limit exists, is independent of how $x_n$ approaches $x$,
	and determines a continuous function on $\mathbb{Z}_p$ with values in $\mathbb{Z}_p^{\times}$.
	Let $\pi \in \mathbb{C}_p$ be the fixed root of $x^{p-1} + p=0$ which satisfies
	$\pi \equiv \zeta_p-1 \pmod{(\zeta_p-1)^2}$. Then the Gross-Koblitz formula relates Gauss sums and the $p$-adic gamma function as follows.
	\begin{theorem}\emph{\cite[Gross-Koblitz]{gross}.}\label{gross} Let $p$ be a prime and  $a\in \mathbb{Z}$ such that $0\leq a\leq p-2$. Then
		\begin{align}
			g(\overline{\omega_p}^a)=-\pi^{a}\Gamma_p\left( \frac{a}{p-1} \right).\notag
		\end{align}
	\end{theorem}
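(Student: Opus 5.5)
The statement immediately above is the Gross--Koblitz formula, quoted from \cite{gross}. The paper does not re-prove it; the plan below sketches the standard route through Stickelberger's theorem and Dwork's splitting function.

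The approach is to compare both sides $p$-adically in the field $\mathbb{Q}_p(\pi)=\mathbb{Q}_p(\zeta_p)$ using Dwork's realization of the additive character. First, express $\theta$ through the Dwork exponential. Let $E(x)=\exp\bigl(\pi x-\pi x^p\bigr)$. One checks via the Artin--Hasse exponential that $E(x)$ has coefficients tending to zero, so it converges on the closed unit disc, and that $\theta(t)=E(\omega_p(t))$ for $t\in\mathbb{F}_p$; the normalization $\pi\equiv\zeta_p-1$ fixes this identification. Then
$$g(\overline{\omega_p}^a)=\sum_{t\in\mathbb{F}_p^\times}\omega_p(t)^{-a}E(\omega_p(t)).$$
This sum can be rewritten as a trace of a Dwork operator $\psi\circ E$ acting on a suitable Banach space of power series, with the sum over Teichm\"uller points isolating the coefficient of $x^{a}$ in the appropriate sense.

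Second, prove the formula first modulo high powers of $\pi$ and then exactly. Expanding $E$ and extracting the coefficient attached to $x^{a}$ gives a congruence $g(\overline{\omega_p}^a)\equiv-\pi^a/a!$ modulo $\pi^{a+1}$; this is Stickelberger's congruence. It already matches $-\pi^a\Gamma_p(a/(p-1))$ at leading order, since $\Gamma_p(a/(p-1))\equiv\Gamma_p(-a)\equiv \pm 1/a!$ modulo $p$. To upgrade to equality, follow Boyarsky or Robert: introduce the $p$-adic function
$$G(a)=-g(\overline{\omega_p}^a)/\pi^a$$
and show it satisfies the same functional equation as $\Gamma_p(a/(p-1))$. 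The key input is the Hasse--Davenport relation together with the recursion obtained from Jacobi sums $J(\overline{\omega}^a,\overline{\omega})$, evaluated via Stickelberger. Alternatively, one can use Dwork's analytic continuation in $a$ to obtain a continuous interpolation and compare the two sides on a dense set.

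The main obstacle is this exact identification step, not the congruence. I would first try Coleman's or Robert's argument: show that $a\mapsto G(a)$ and $a\mapsto\Gamma_p(a/(p-1))$ agree at $a=0$ (both equal $1$, since $g(\varepsilon)=-1$ with $\chi(0)=0$). Then check that both satisfy the Gauss multiplication formula, which corresponds to Hasse--Davenport on the Gauss-sum side. Finally, use that a continuous function satisfying these relations with the same leading congruences is determined uniquely. Since the paper only invokes this formula, citing \cite{gross} suffices for the present work.
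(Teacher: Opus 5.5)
You are right that the paper only quotes this result, and your Stickelberger step is sound. Writing $E(x)=\sum\lambda_nx^n$, that step needs Dwork's estimate $\mathrm{ord}_p\lambda_n\ge n(p-1)/p^2$ to discard the terms $\lambda_{a+k(p-1)}$ with $k\ge 1$. Also, $\Gamma_p(-a)=1/a!$ holds exactly, by the reflection formula and $\Gamma_p(a+1)=(-1)^{a+1}a!$, so the sign is the $+$ that your leading-order match requires. The gap is the passage from congruence to equality, which is the whole content of the theorem. Your $G(a)=-g(\overline{\omega_p}^a)/\pi^a$ is defined only on the finite set $\{0,1,\dots,p-2\}$, so uniqueness of ``a continuous function satisfying these relations'' has no force there. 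Nor does the functional equation $\Gamma_p(x+1)=-x\Gamma_p(x)$ yield any relation among the $G(a)$. The shift $x=a/(p-1)\mapsto x+1$ corresponds to $a\mapsto a+p-1$, which is outside the range. The shift $a\mapsto a+1$ corresponds to $x\mapsto x+1/(p-1)$, for which $\Gamma_p$ satisfies no functional equation. The relations you can actually check on both sides do not determine the values. These are $G(0)=1$, the reflection identity $G(a)G(p-1-a)=(-1)^{a+1}$ for $1\le a\le p-2$, Hasse--Davenport matched with the multiplication formula, and the leading congruence. For $p=5$, the Hasse--Davenport relations over $\mathbb{F}_5$ give nothing beyond reflection, and $G(1)=1+5c$, $G(3)=(1+5c)^{-1}$ satisfies all of them for every $c\in\mathbb{Z}_5$. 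What would pin the values down are exact Jacobi sum identities; for $p=5$ this is $G(1)^4=-J(\overline{\omega_5},\overline{\omega_5})^2$. But your recursion evaluates $J(\overline{\omega_p}^a,\overline{\omega_p})$ only through Stickelberger, i.e.\ modulo $p$. An exact evaluation of Jacobi sums as $\Gamma_p$-quotients is itself a form of the Gross--Koblitz formula.

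The missing idea is an interpolation in a genuine $p$-adic variable, namely Boyarsky's principle. In the Boyarsky--Dwork proof, Dwork's Frobenius (built from $\exp(\pi x-\pi x^p)$) acts on the one-dimensional cohomology attached to $x^{s}e^{\pi x}$ for \emph{every} $s\in\mathbb{Z}_p$. One proves that the resulting Frobenius constant, suitably normalized, is a continuous function $F$ of $s$ with $F(0)=1$. It satisfies the functional equation of $\Gamma_p$, which comes from the relation $d(x^{s}e^{\pi x})=(s+\pi x)x^{s-1}e^{\pi x}\,dx$ in cohomology. At the Frobenius-fixed exponents $s=a/(p-1)$, Dwork's trace formula identifies $F(s)$ with $-g(\overline{\omega_p}^a)/\pi^a$. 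Since the positive integers are dense in $\mathbb{Z}_p$, it follows that $F=\Gamma_p$. Your remark about ``Dwork's analytic continuation in $a$'' points this way, but continuity in $s$ and the shift equation are exactly what must be proved. The argument you actually develop (Hasse--Davenport, Jacobi sums modulo $p$, uniqueness) cannot replace them.
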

   \subsection{Commutative Formal Group Law (CFGL)}
	Now we recall the commutative formal group law property, which will be used in the proof of Theorem~\ref{main-thm-K5}. 
	\begin{proposition}[\cite{beukers},\cite{Stienstra_Beukers}]\label{CFGL-isom}
	Let $p$ be a prime and $R$ be a $\mathbb{Z}_p$-algebra equipped with an endomorphism $\sigma:R\rightarrow R$ satisfying that $\sigma(a)-a^p\in pR$ for all $a \in R$. Let $\omega(x) = \sum_{n=1}^{\infty} b_{n}x^{n-1} {dx}$ with $b_{n} \in R$ for all $n \geq 1$. Let $x(u) = \sum_{n=1}^{\infty} a_{n}u^{n}\in R[[u]]$ and suppose $\omega(x(u)) = \sum_{n=1}^{\infty} c_{n}u^{n-1} {du}$ with $c_n\in R$. 		
	If there exists $\alpha_{p}, \beta_{p} \in R$ with $\beta_{p}\in pR$ such that for all $m,r \in \mathbb{N}$,
	\begin{equation}\label{eq_b_congruence}
					b_{mp^{r}}-\alpha_{p}\sigma(b_{mp^{r-1}}) +\beta_{p} \sigma^2(b_{mp^{r-2}}) \equiv 0 \pmod{p^r};
	\end{equation}
	then for all $m,r \in \mathbb{N}$
	\begin{equation}\label{eq_c_congruence}
					c_{mp^{r}} -\alpha_{p}\sigma(c_{mp^{r-1}}) + \beta_{p}\sigma^2(c_{mp^{r-2}}) \equiv 0 \pmod{p^r}.
    \end{equation}   
	If $a_{1}$ is invertible in $R$ then \eqref{eq_c_congruence} implies \eqref{eq_b_congruence}. Moreover, if $b_p/b_1\in R^\times$, then there exists $\mu_p\in  R^\times$ such that for $m,r\ge 1$
	\begin{equation*}
	b_{mp^r}\equiv \mu_p \sigma(b_{mp^{r-1}})\pmod {p^r},\quad \text{and} \quad  c_{mp^r}\equiv \mu_p \sigma(c_{mp^{r-1}})\pmod {p^r}.
	\end{equation*}
	Here $\mu_p$ is referred to as the unit root of $\omega(x)$.
	\end{proposition}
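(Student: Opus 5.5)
The plan is to recast the congruence conditions \eqref{eq_b_congruence} and \eqref{eq_c_congruence} as integrality statements about formal logarithms, following the Atkin--Swinnerton-Dyer/Dwork style. These integrality statements then transfer directly under the substitution $x=x(u)$. Throughout we use the convention that $b_{n}=0$ (resp.\ $c_n=0$) whenever $n\notin\mathbb{N}$. We extend $\sigma$ to power series by acting on coefficients, and write $f^{\sigma}$ for the result.

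\textbf{Step 1: reformulate as integrality.} Put
\[
\ell_b(x)=\sum_{n\ge1}\frac{b_n}{n}x^n, \qquad \ell_c(u)=\sum_{n\ge1}\frac{c_n}{n}u^n,
\]
so that $\omega=d\ell_b$ and $\ell_c(u)=\ell_b(x(u))$. I would first prove the following equivalence: \eqref{eq_b_congruence} holds for all $m,r$ if and only if
\[
G_b(x):=\ell_b(x)-\frac{\alpha_p}{p}\,\ell_b^{\sigma}(x^p)+\frac{\beta_p}{p^2}\,\ell_b^{\sigma^2}(x^{p^2})\in R[[x]].
\]
This is checked by comparing coefficients. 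Write $n=mp^r$ with $p\nmid m$; the coefficient of $x^{mp^r}$ in $G_b$ is
\[
\frac{1}{mp^r}\Bigl(b_{mp^r}-\alpha_p\sigma(b_{mp^{r-1}})+\beta_p\sigma^2(b_{mp^{r-2}})\Bigr).
\]
Since $m$ is a unit in $R$, this coefficient lies in $R$ exactly when the bracket is $\equiv 0\pmod{p^r}$. The statement of the proposition has general $m$, but the case $p\mid m$ reduces to this one by re-indexing $r$. The same reformulation applies verbatim to $c$ and $G_c$.

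\textbf{Step 2: transport under $x=x(u)$.} This is the main technical point. We have $\ell_b(x(u))=\ell_c(u)$, and I need to compare $\frac1p\ell_b^{\sigma}(x(u)^p)$ with $\frac1p\ell_c^{\sigma}(u^p)=\frac1p\ell_b^{\sigma}(x^{\sigma}(u^p))$. The hypothesis $\sigma(a)\equiv a^p \pmod{pR}$ gives $x(u)^p\equiv x^{\sigma}(u^p)\pmod{pR[[u]]}$.

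The elementary lemma I need is: if $A\equiv B\pmod{p}$, then $A^{p^s m}\equiv B^{p^s m}\pmod{p^{s+1}}$. From it, each term
\[
\frac{\sigma(b_{mp^s})}{p\,mp^s}\bigl(A^{mp^s}-B^{mp^s}\bigr)
\]
is integral, so
\[
\frac1p\ell_b^\sigma(A)-\frac1p\ell_b^\sigma(B)\in R[[u]].
\]
For the third term, apply the same lemma with $x(u)^{p^2}\equiv x^{\sigma^2}(u^{p^2})\pmod p$. That lemma only absorbs one factor of $p$, and the factor $\beta_p\in pR$ absorbs the second power of $p$ in the denominator. Consequently $G_b(x(u))-G_c(u)\in R[[u]]$.

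Since $G_b(x(u))$ is integral whenever $G_b$ is, we get that \eqref{eq_b_congruence} implies \eqref{eq_c_congruence}. If $a_1\in R^\times$, then $x(u)$ has a compositional inverse in $R[[u]]$. Running the same argument in reverse gives the converse.

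\textbf{Step 3: the unit root.} Take $m=r=1$ in \eqref{eq_b_congruence}. This gives $b_p\equiv\alpha_p\sigma(b_1)\pmod p$. If $b_p/b_1$ is a unit, then $\alpha_p$ is a unit, and hence $\sigma(b_1)$ is a unit as well; the latter is used below. Since $\beta_p\in pR$, Hensel's lemma gives a unit root $\mu_p$ of $T^2-\alpha_pT+\beta_p$, with the other root $\beta_p/\mu_p\in pR$.

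Now set
\[
d_k:=b_{mp^k}-\mu_p\sigma(b_{mp^{k-1}}).
\]
Substituting $\alpha_p=\mu_p+\beta_p/\mu_p$ into \eqref{eq_b_congruence}, the three-term recurrence factors as
\[
d_k\equiv \frac{\beta_p}{\mu_p}\,\sigma(d_{k-1})\pmod{p^k}.
\]
The base case is $d_0=b_m$, which is integral. An induction then gives $d_k\equiv0\pmod{p^k}$, because each step multiplies by an element of $pR$. This is the first congruence for $b$.

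For $c$, the naive route would be to apply the same factoring to \eqref{eq_c_congruence} with the same $\alpha_p,\beta_p$, hence the same $\mu_p$. That route fails, because Hensel's lemma for the quadratic $T^2-\alpha_pT+\beta_p$ requires $\alpha_p$ itself to be a unit, and this was deduced above from the hypothesis that $b_p/b_1$ is a unit. No analogous condition on $c_p/c_1$ is available, so the unit-root step cannot simply be repeated for $c$.

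Instead, I would transport the first-order congruence itself through Step 2. Once $b_{mp^r}\equiv\mu_p\sigma(b_{mp^{r-1}})\pmod{p^r}$ holds for all $m,r$, the argument of Step 1 shows that
\[
\ell_b(x)-\frac{\mu_p}{p}\,\ell_b^{\sigma}(x^p)\in R[[x]].
\]
This is the same integrality statement as before, with $\beta_p=0$ and $\alpha_p$ replaced by $\mu_p$. Step 2 applies verbatim to it; it is in fact simpler, since there is no third term. We conclude that
\[
\ell_c(u)-\frac{\mu_p}{p}\,\ell_c^{\sigma}(u^p)\in R[[u]],
\]
which by Step 1 is exactly $c_{mp^r}\equiv\mu_p\sigma(c_{mp^{r-1}})\pmod{p^r}$.

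I expect the genuine obstacle to be the congruence lemma in Step 2, $A^{p^sm}\equiv B^{p^sm}\pmod{p^{s+1}}$ in the noncommutative-looking setting of power series twisted by $\sigma$. I would prove it by induction on $s$ via the binomial expansion of $(B+pC)^p$. The remaining steps are bookkeeping.
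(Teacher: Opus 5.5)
The paper gives no proof of this proposition; it is quoted from Beukers and Stienstra--Beukers. Your Steps 1 and 2 are the standard formal-logarithm (functional-equation) argument and are correct, as is the converse via the compositional inverse of $x(u)$. This tacitly assumes $R$ is $p$-torsion free, so that $\ell_b$, $\ell_c$ and the divisions by $p$ make sense. The genuine gap is the factorization in Step 3. Your $\mu_p$ is the ordinary root of $T^2-\alpha_pT+\beta_p$, and $\sigma$ acts on it. With $d_k=b_{mp^k}-\mu_p\sigma(b_{mp^{k-1}})$, expanding gives
\begin{equation*}
d_k-\frac{\beta_p}{\mu_p}\,\sigma(d_{k-1})=\Bigl(b_{mp^k}-\alpha_p\sigma(b_{mp^{k-1}})+\beta_p\sigma^2(b_{mp^{k-2}})\Bigr)+\frac{\beta_p\bigl(\sigma(\mu_p)-\mu_p\bigr)}{\mu_p}\,\sigma^2(b_{mp^{k-2}}).
\end{equation*}
The last term is only known to lie in $pR$: $\sigma(\mu_p)-\mu_p\equiv\mu_p^p-\mu_p\pmod p$ need not vanish, and $b_{mp^{k-2}}$ may be a unit. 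So your induction breaks at $k=2$ unless $\sigma(\mu_p)=\mu_p$, as happens for $R=\mathbb{Z}_p$ with $\sigma=\mathrm{id}$.

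The fix is to factor in the twisted polynomial ring, $1-\alpha_p\Phi+\beta_p\Phi^2=(1-\nu_p\Phi)(1-\mu_p\Phi)$ with $\Phi c=\sigma(c)\Phi$. That is, you need $\mu_p\in R^\times$ with $\mu_p=\alpha_p-\beta_p/\sigma(\mu_p)$, and you set $\nu_p:=\beta_p/\sigma(\mu_p)\in pR$. Such a $\mu_p$ exists as the $p$-adic limit of $\mu^{(0)}=\alpha_p$, $\mu^{(n+1)}=\alpha_p-\beta_p/\sigma(\mu^{(n)})$; each step gains a factor of $p$ because $\beta_p\in pR$. Then $b_{mp^k}-\alpha_p\sigma(b_{mp^{k-1}})+\beta_p\sigma^2(b_{mp^{k-2}})=d_k-\nu_p\sigma(d_{k-1})$ holds exactly. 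Your induction then works, and so does your transport of the first-order congruence to the $c_n$.

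Your choice of root genuinely fails, not just your write-up. Take $R=W(\mathbb{F}_{p^2})$ with Frobenius $\sigma$, $\alpha_p$ the Teichm\"uller lift of some $a\in\mathbb{F}_{p^2}\setminus\mathbb{F}_p$, and $\beta_p=p$. Set $b_1=1$, $b_{p^k}=\mu_p\sigma(b_{p^{k-1}})$ with the twisted $\mu_p$, and $b_n=0$ otherwise. The hypotheses hold, but the ordinary root $\mu$ satisfies $\mu_p-\mu=\beta_p(\sigma(\mu_p)-\mu)/(\mu\sigma(\mu_p))\in pR\setminus p^2R$. Hence $b_{p^2}\not\equiv\mu\,\sigma(b_p)\pmod{p^2}$.

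Two smaller points. First, deducing that $\alpha_p$ is a unit from $b_p/b_1\in R^\times$ requires $b_1\in R^\times$, and Hensel requires $p$-adic completeness of $R$. Without $b_1\in R^\times$ the ``moreover'' part is false. For example, take $R=\mathbb{Z}_p$, $\sigma=\mathrm{id}$, $\alpha_p=\beta_p=0$, $b_1=b_p=p$, and $b_n=n$ otherwise; then $p^2=b_{p^2}\equiv\mu_pb_p\pmod{p^2}$ forces $p\mid\mu_p$. Second, your claim that the direct route fails for the $c_n$ is mistaken. $\mu_p$ depends only on $\alpha_p,\beta_p$, so the corrected factorization applies verbatim to \eqref{eq_c_congruence}. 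Your alternative route is also valid, so this costs nothing.
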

	\subsection{The Explicit Hypergeometric Modularity Method (EHMM)}\label{sec:EHMM}
	We now recall the EHMM, as developed by Allen \emph{et al.} \cite{allen_EHMM2,allen_EHMM1}. 
	\begin{theorem}\emph{\cite[Theorem 2.1]{allen_EHMM1}.}\label{thm-EHMM}
	Let  $n=3$ or $4$. Assume $\alpha^\flat=\{r_1,\cdots,r_{n-1}\}$, where $0<r_1
	\le\cdots\le r_{n-1}<1$, and $\beta^\flat=\{1,\cdots,1\}$ $($with multiplicity $n-1$$)$, with $r_n,q_n$ such that  $0<r_n<q_n\le 1$ and $r_2<q_n$.  Let $\mathrm{HD}=\{\{r_n\}\cup \alpha^\flat,\{q_n\}\cup \beta^\flat\}$ and $M=\mathrm{lcd}(\mathrm{HD})$. 
	Further assume $\gamma(\mathrm{HD}):=-1+\sum_{i=1}^{n}(q_i-r_i) \leq 1$ and that
	\begin{enumerate}
	\item there exists a modular function $t=C_1 q^{}+O(q^2) \in\mathbb{Z}[[q]]$ such that 
	\begin{equation*}
				f_{\mathrm{HD}}(q):= C_1^{-r_n}\cdot t(q)^{r_n}(1-t(q))^{q_n-r_n-1}F(\alpha^\flat,\beta^\flat;t(q))q\frac{dt(q)}{t(q)dq}
	\end{equation*}
	is a congruence weight-$n$ holomorphic cusp form satisfying that, for each prime $p$ such that $p\equiv 1\pmod M$, $T_p f_{\mathrm{HD}}=\tilde b_p  f_{\mathrm{HD}}$ for some $\tilde b_p$  in $\mathbb{Z}$  where $T_p$ is the $p$th Hecke operator;
	\item for any prime ideal $\mathfrak{p}$ in $\mathbb{Z}[\zeta_M]$ above $p$ such that $p\equiv 1\pmod M$,  
	$$\iota_{\mathfrak{p}}(r_n)(C_1)^{-1}\prod_{i=2}^{n-1}\iota_{\mathfrak{p}}(r_i)(-1)   \cdot \mathbb{P}\left(\mathrm{HD};1;{\mathfrak{p}}\right)\in \mathbb{Z}.$$  
	\end{enumerate}
	Then there exists a normalized Hecke eigenform $f_{\mathrm{HD}}^\sharp$ built from $ f_{\mathrm{HD}}$, not necessarily unique, such that for each $p>29$ such that $p\equiv 1 \pmod M$, $\tilde b_p=a_p(f_{\mathrm{HD}}^\sharp)$. More explicitly,  
	\begin{align*}
			a_p(f_{\mathrm{HD}}^\sharp)=&   { (-1)^{n-1}} \iota_{\mathfrak{p}}(r_n)(C_1)^{-1}  \cdot \prod_{i=2}^{n-1}\iota_{\mathfrak{p}}(r_i)(-1)\cdot \mathbb{P}(\mathrm{HD};1;{\mathfrak{p}})-\delta_{\gamma(\mathrm{HD})=1}\cdot \psi_{\mathrm{HD}}(p) \cdot p.
	\end{align*} 
	Here $\delta_{\gamma(\mathrm{HD})=1}$ is equal to 1 when ${\gamma(\mathrm{HD})=1}$ and is 0 otherwise and
	\begin{equation*}
			\psi_{\mathrm{HD}}(p) \equiv (-1)^{n-1}\cdot{C_1^{(p-1)r_n}}\frac{\Gamma_p\left({ q_n-r_n}\right)}{\Gamma_p{(r_1)}\cdots\Gamma_p(r_{n-1})}\pmod p. 
	\end{equation*}
	In terms of Galois representations,
	$$\rho_{f_{\mathrm{HD}}^{\sharp}}|_{G(M)} \cong \chi_{\mathrm{HD}} \otimes \rho_{\{\mathrm{HD};1\}}-\delta_{\gamma(\mathrm{HD})=1}\cdot \psi_{\mathrm{HD}}\epsilon_\ell|_{G(M)},$$
	where $G(M)$ denotes the Galois group $\mathrm{Gal}\left({\overline{\mathbb{Q}}}/{\mathbb{Q}(\zeta_{M})}\right)$, $\rho_{f_{\mathrm{HD}}^{\sharp}}$ is the Galois representation associated to the eigenform $f_{\mathrm{HD}}^{\sharp}$ by Deligne, $\rho_{\{\mathrm{HD};1\}}$ is the hypergeometric Galois representation associated to $\mathrm{HD}$ from Theorem~\ref{thm:Katz} by Katz, $\epsilon_\ell$ is the $\ell$-adic cyclotomic character, and $$\chi_{\mathrm{HD}}(\mathfrak{p}):= \iota_{\mathfrak{p}}(r_{n})(C_{1})^{-1} \cdot \prod_{i=1}^{n-1} \iota_{\mathfrak{p}}(r_{i})(-1).$$
	\end{theorem}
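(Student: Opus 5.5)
The plan is to compare two independent descriptions of the same $p$-adic unit root: one from the Hecke side, through the $q$-expansion of $f_{\mathrm{HD}}$, and one from the hypergeometric side, through the $t$-expansion of the differential. Proposition~\ref{CFGL-isom} is the bridge between the two expansions. Throughout, fix a prime $p\equiv 1\pmod M$ with $p>29$ and $p\nmid C_1$, and let $\mathfrak{p}$ lie above $p$. Take $R=\mathbb{Z}_p$, or $\mathbb{Z}_p[\zeta_M]$, which is the same ring since $p\equiv1 \pmod M$, with $\sigma$ the identity. Consider the differential
\[
\omega(t)=C_1^{-r_n}\,t^{r_n}(1-t)^{q_n-r_n-1}F(\alpha^\flat,\beta^\flat;t)\,\frac{dt}{t}.
\]
After the substitution $t=C_1 u$ and factoring out the harmless monomial $u^{r_n}$, this becomes $\sum_k b_k u^{k-1}du$. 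Its pullback along $u\mapsto t(q)/C_1$, which has invertible linear coefficient $a_1=1$, is $f_{\mathrm{HD}}(q)\,dq/q=\sum c_k q^{k-1}dq$.

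\textbf{Step 1 (Hecke side).} Since $T_pf_{\mathrm{HD}}=\tilde b_p f_{\mathrm{HD}}$ and $f_{\mathrm{HD}}$ has weight $n$, its coefficients satisfy the Atkin--Swinnerton-Dyer relations
\[
c_{mp^r}-\tilde b_p c_{mp^{r-1}}+p^{n-1}c_{mp^{r-2}}\equiv 0\pmod{p^r}.
\]
These follow from the Hecke recursion $c_{mp}=\tilde b_p c_m-p^{n-1}c_{m/p}$ by induction. The converse direction of Proposition~\ref{CFGL-isom} then transfers them to the $b_k$, with $\alpha_p=\tilde b_p$ and $\beta_p=p^{n-1}$. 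If $b_p/b_1$ is a unit, which I expect generically for $p\equiv1\pmod M$ by checking the leading Pochhammer quotient, there is a unit root $\mu_p$ with $b_{mp^r}\equiv \mu_p b_{mp^{r-1}}\pmod{p^r}$. Moreover $\mu_p$ is the unit root of $X^2-\tilde b_pX+p^{n-1}$. When $\gamma(\mathrm{HD})=1$, one instead works with the three-term relation coming from the extra eigenvalue $\psi_{\mathrm{HD}}(p)p$.

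\textbf{Step 2 (hypergeometric side).} Using Euler's integral \eqref{euler}, I will write $b_k$ explicitly as a finite convolution of the coefficients $(r_1)_j\cdots(r_{n-1})_j/j!^{\,n-1}$ with those of $(1-t)^{q_n-r_n-1}$, i.e.\ a truncated $_nF_{n-1}$ with data $\mathrm{HD}$. Dwork-type congruences give $b_{mp^r}\equiv \lambda_p b_{mp^{r-1}}\pmod{p^r}$, and at $r=1$ we get $\lambda_p\equiv b_p/b_1\pmod p$. Next, expand $\mathbb{P}(\mathrm{HD};1;\mathfrak{p})$ via Gauss sums, writing each $\iota_\mathfrak{p}(r_i)$ as a power of $\overline{\omega_p}$, and apply the Gross--Koblitz formula (Theorem~\ref{gross}). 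Reducing modulo $p$, the only surviving terms are those of the truncated sum. This yields
\[
(-1)^{n-1}\iota_\mathfrak{p}(r_n)(C_1)^{-1}\prod_{i=2}^{n-1}\iota_\mathfrak{p}(r_i)(-1)\,\mathbb{P}(\mathrm{HD};1;\mathfrak{p})\equiv \lambda_p+\delta_{\gamma=1}\psi_{\mathrm{HD}}(p)p \pmod{p}.
\]
Here the $\Gamma_p$-quotient defining $\psi_{\mathrm{HD}}$ appears exactly as the leading $p$-adic gamma factor, and the factor $C_1^{(p-1)r_n}$ comes from the normalization $t=C_1u$.

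\textbf{Step 3 (upgrade congruence to equality).} Let $A$ denote the integer in hypothesis (2), multiplied by $(-1)^{n-1}$ and with the $\delta\cdot\psi p$ term removed. Both $A$ and $\tilde b_p$ are integers. Deligne's bound for $f_{\mathrm{HD}}$, together with Katz's purity for $\rho_{\{\mathrm{HD};1\}}$ (Theorem~\ref{thm:Katz}), bounds each by $2p^{(n-1)/2}$ in absolute value. The mod $p$ congruence from Steps 1--2 already forces $A=\tilde b_p$ when $n=3$ and $p$ is large. For $n=4$, where $2p^{3/2}$ exceeds $p/2$, a mod $p$ congruence is insufficient, and this is the main obstacle.

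To handle it, I would first identify the unit roots, not merely their reductions. On the Galois side, the Frobenius eigenvalues of $\chi_{\mathrm{HD}}\otimes\rho_{\{\mathrm{HD};1\}}$, after removing the $\psi_{\mathrm{HD}}\epsilon_\ell$ piece when $\gamma=1$, are a $p$-adic unit $\nu_p$ and $p^{n-1}/\nu_p$. Dwork's theory identifies $\nu_p$ with the limit $\lambda_p$ of the ratios $b_{mp^r}/b_{mp^{r-1}}$. Since both unit roots are limits of the same ratios, Step~1 gives $\mu_p=\lambda_p=\nu_p$. The equal determinants $p^{n-1}$ then force
\[
\tilde b_p=\mu_p+p^{n-1}/\mu_p=A.
\]
This is precisely the claimed formula for $a_p(f^\sharp_{\mathrm{HD}})$, where $f^\sharp_{\mathrm{HD}}$ is the normalized eigenform in the Hecke-stable space spanned by $f_{\mathrm{HD}}$. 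The Galois statement on $G(M)$ then follows from Chebotarev density together with Brauer--Nesbitt. The restriction $p>29$ absorbs the finitely many primes where the unit condition or $p\nmid C_1$ might fail.
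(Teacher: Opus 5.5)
The gap is in Step 3. Steps 1--2 give at most $A\equiv\tilde b_p\pmod p$, or equivalently $\nu_p\equiv\mu_p\pmod p$. Modulo $p$ the term $\delta_{\gamma(\mathrm{HD})=1}\psi_{\mathrm{HD}}(p)p$ vanishes, so nothing in your Step 2 can produce the $\Gamma_p$-formula for $\psi_{\mathrm{HD}}$; that formula is what a computation modulo $p^2$ yields.

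Your remedy, ``Dwork's theory identifies $\nu_p$ with $\lambda_p$,'' is not an available result. It is essentially the theorem itself. Here $\nu_p$ is a Frobenius eigenvalue of Katz's representation at the degenerate fibre $\lambda=1$, where the rank drops to $n-1$. By contrast, $\lambda_p$ is the unit root of the formal group attached to the Euler integrand in the variable $u$. Equating them exactly would require three things you do not supply: realizing that differential in the cohomology carrying $\chi_{\mathrm{HD}}\otimes\rho_{\{\mathrm{HD};1\}}$, showing it spans the unit-root part, and a crystalline comparison at $t=1$.

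The argument of \cite{allen_EHMM1}, which this paper follows in the proof of Proposition~\ref{prop-main-thm}, never identifies unit roots exactly. It combines the CFGL with the supercongruence \cite[Theorem 2.3]{allen_EHMM1} to get $A\equiv\tilde b_p\pmod{p^2}$, with $\psi_{\mathrm{HD}}(p)p$ entering at order $p$. The archimedean bounds then force equality: for $n=4$ one has $|A-\tilde b_p|\le 5p^{3/2}+p<p^2$ once $p\ge 29$. This is exactly where the hypothesis $p>29$ comes from.

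Several further points need repair.

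First, your claim that a mod $p$ congruence already settles $n=3$ is false. Your own bound $2p^{(n-1)/2}=2p$ allows $A-\tilde b_p$ to be any of $0,\pm p,\dots,\pm 4p$, so $n=3$ also needs the mod $p^2$ statement.

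Second, the unit-root argument requires $p\nmid\tilde b_p$ for every $p>29$. Nothing makes the non-ordinary primes of a non-CM eigenform of weight $3$ or $4$ finite, let alone bounded by $29$. So saying that $p>29$ ``absorbs'' them is unjustified.

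Third, the set-up needs fixing in two places:
\begin{itemize}
\item $t^{r_n}$ is not a harmless monomial to factor out. With $r_n=m/e$ you must substitute $u=(t/C_1)^{1/e}$, as the paper does for $\mathbb{K}_5$. The congruences then concern the indices $mp^r$, and the unit condition is on $b_{mp}/b_m$, not $b_p/b_1$.
\item $C_1^{r_n}$ need not lie in $\mathbb{Z}_p$. One takes $R=\mathbb{Z}_p[C_1^{r_n}]$ with $\sigma(C_1^{r_n})=C_1^{pr_n}$ rather than $\sigma=\mathrm{id}$; this is how $\iota_{\mathfrak p}(r_n)(C_1)$ enters.
\end{itemize}

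Finally, the term $\psi_{\mathrm{HD}}(p)p$ has no role in Step 1. $f_{\mathrm{HD}}$ satisfies the two-term Atkin--Swinnerton-Dyer relation whatever $\gamma(\mathrm{HD})$ is, and the extra one-dimensional piece exists only on Katz's side.
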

	We now discuss hypergeometric character sums and condition (2) of Theorem~\ref{thm-EHMM} in preparation for proving Theorem~\ref{main-thm-K4}.
	\subsection{Hypergeometric Character Sums and Extendability}
	We recall the fundamental results of Katz  on hypergeometric Galois representations.
	Let $G(M):=\mathrm{Gal}\left({\overline{\mathbb{Q}}}/{\mathbb{Q}(\zeta_{M})}\right)$.
	\begin{theorem}[\cite{katz_galois1}]\label{thm:Katz}
	Let $\ell$ be a prime. Let $\mathrm{HD}=\{\{r_1,\dots,r_n\},\{q_1=1,q_2,\dots,q_n\}\}$
	be a primitive hypergeometric datum with $M=\mathrm{lcd}(\mathrm{HD})$.
	For any $\lambda\in\mathbb{Z}[\zeta_M,1/M]\setminus\{0\}$, the following hold:
	\begin{itemize}
	\item[(i)] 
	There exists an $\ell$-adic Galois representation
    $\rho_{\{\mathrm{HD};\lambda\}}:G(M)\to GL(W_\lambda),$ unramified almost everywhere, such that for every nonzero prime ideal $\mathfrak{p}$ of $\mathbb{Z}[\zeta_M,1/(M\ell\lambda)]$ of norm $N(\mathfrak{p})=|\mathbb{Z}[\zeta_M]/\mathfrak{p}|$,
	\[\mathrm{Tr}\,\rho_{\{\mathrm{HD};\lambda\}}(\mathrm{Frob}_{\mathfrak p}) = (-1)^{n-1}\,\iota_{\mathfrak p}(r_1)(-1)\,
			\mathbb{P}(\mathrm{HD};1/\lambda;\mathfrak p),
	\]
	where $\mathrm{Frob}_{\mathfrak p}$ denotes the geometric Frobenius conjugacy class of $G(M)$ at $\mathfrak{p}$.
	\item[(ii)] 
	If $\lambda=1$, then
	$\dim_{\overline{\mathbb Q}_\ell}W_1=n-1$. All roots of the characteristic polynomial of $\rho_{\{\mathrm{HD};1\}}(\mathrm{Frob}_{\mathfrak p})$ have absolute value less than or equal to
	$ N(\mathfrak p)^{(n-1)/2}$ under all archimedean embeddings.
	\end{itemize}
	 \end{theorem}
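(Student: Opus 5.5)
Since this is Katz's theorem, the natural route is his geometric one: realize $\mathbb{P}(\mathrm{HD};\cdot;\mathfrak{p})$ as the Frobenius trace function of an $\ell$-adic sheaf on $\mathbb{G}_m$ over $S:=\mathrm{Spec}\,\mathbb{Z}[\zeta_M,1/(M\ell)]$, and then take stalks.

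\emph{Construction of the sheaves.} The finite field analogue of Euler's integral \eqref{euler} guides the construction. For each character $\iota_{\mathfrak p}(a)$ with $a\in\frac1M\mathbb{Z}$, use the Kummer sheaf $\mathcal{L}_{a}$ on $\mathbb{G}_{m,S}$. It is lisse of rank one with $\mathrm{Tr}(\mathrm{Frob}_x\mid\mathcal{L}_a)=\iota_{\mathfrak p}(a)(x)$, and these sheaves exist uniformly over $S$ because the $M$-th residue symbol is defined over $\mathbb{Z}[\zeta_M,1/M]$. Set $\mathcal{H}_1:=\mathcal{L}_{-r_1}(1-x)$. Inductively, define $\mathcal{H}_k$ as the middle extension of
\[
R^1\mathrm{pr}_{!}\bigl(m^*\mathcal{H}_{k-1}\otimes \mathcal{L}_{r_k}(t)\otimes\mathcal{L}_{q_k-r_k-1}(1-t)\bigr),
\]
where $m(x,t)=xt$ and $\mathrm{pr}(x,t)=x$; this is a multiplicative convolution with a beta-type kernel. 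The Grothendieck--Lefschetz trace formula gives the trace at $z\in\kappa_{\mathfrak p}^\times$ as $-\sum_t \mathrm{Tr}(\mathcal{H}_{k-1})(zt)\,R_k(t)\,\overline{R_k}Q_k(1-t)$. We then check that this recursion reproduces the $\chi$-expansion defining ${}_{k}\mathbb{P}_{k-1}$. To do so, expand $\mathrm{Tr}(\mathcal{H}_{k-1})(zt)=\sum_\chi c_\chi\chi(z)\chi(t)$ and use orthogonality. Each step produces the Jacobi sum $J(R_k\chi,\overline{R_k}Q_k)$, which is ${R_k\chi\choose Q_k\chi}$ up to $q$ and the sign $Q_k\chi(-1)$. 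Tracking these signs and the homological shifts accounts for the factor $(-1)^{n-1}\iota_{\mathfrak p}(r_1)(-1)$.

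\emph{Specialization.} For $\lambda\neq0$ we take $W_\lambda$ to be the stalk of $\mathcal{H}_n$ at $x=1/\lambda$. This stalk carries an action of $\pi_1(\mathrm{Spec}\,\mathbb{Z}[\zeta_M,1/(M\ell\lambda)])$, hence of $G(M)$, and the trace identity in (i) follows from the previous paragraph. Unramifiedness almost everywhere follows from constructibility of $\mathcal{H}_n$ together with generic base change. For (ii), we invoke Katz's structure theory. Primitivity makes $\mathcal{H}_n$ irreducible and lisse of rank $n$ on $\mathbb{G}_m\setminus\{1\}$, and its local monodromy at $x=1$ is a tame pseudo-reflection. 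The middle-extension stalk at $1$, namely the inertia invariants, therefore has dimension $n-1$. For the weights, Deligne's Weil II shows that $\mathcal{H}_n$ is pure of weight $n-1$ on the lisse locus, so the stalk of $j_*$ at $1$ is mixed of weights $\le n-1$. This gives the bound $N(\mathfrak p)^{(n-1)/2}$.

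\emph{Main obstacle.} The hardest step is the analysis at the singular point $x=1$. There we must show that the local monodromy is exactly a pseudo-reflection, which needs primitivity and an inductive control of the ramification of the convolution. We must also justify that the trace of $\mathrm{Frob}_{\mathfrak p}$ on the invariants still equals the character sum at $z=1$, i.e.\ that no correction term appears there. I would first try to settle this by computing local monodromy through the convolution formalism, using the fact that convolution with the kernel $\mathcal{L}_{r_k}(t)\otimes\mathcal{L}_{q_k-r_k-1}(1-t)$ adds one unipotent-free eigenvalue at $1$, and then compare traces using the middle-extension property.
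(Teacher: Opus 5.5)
The paper does not prove this statement; it quotes it from Katz (via Allen et al.). The only meaningful comparison is therefore with Katz's own argument, and your outline is a faithful sketch of it: the hypergeometric sheaf as an iterated multiplicative convolution of Kummer sheaves, lisse of rank $n$ off $x=1$, with a tame pseudo-reflection at $1$, middle extension, and Weil II. What you call the main obstacle is exactly the content of Katz's structure theorem for disjoint (primitive) parameters (Chapter 8 of \emph{Exponential Sums and Differential Equations}). It says that under primitivity $H^0_c=H^2_c=0$ on all fibres and $H^1_c\cong H^1$ on the fibres over $z\neq1$, which gives purity of weight $k-1$. It also says that the $!$-convolution is already the middle extension, so no correction term appears at $x=1$. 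This must be part of your inductive hypothesis at every stage, not only the last. Your formula for $\mathrm{Tr}\,\mathcal{H}_k(z)$ uses the stalk $\mathcal{H}_{k-1}(1)$ at $t=1/z$. The rank $k$ of $\mathcal{H}_k$, by Euler characteristic, uses $\dim\mathcal{H}_{k-1}(1)=k-2$.

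Two of your concrete claims are wrong, though both can be repaired.

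\emph{The factor $\iota_{\mathfrak p}(r_1)(-1)$.} Sign-tracking does not produce it. With $\chi(0)=0$ one has $J(A,\overline{AB})=A(-1)J(A,B)$, so $J(R_k\chi,\overline{R_k}Q_k)=q\,R_kQ_k(-1)\binom{R_k\chi}{Q_k\chi}$. The sign is $R_kQ_k(-1)$, independent of $\chi$; a sign $Q_k\chi(-1)$, as you wrote, would move $z$ to $-z$ at each step. Combined with $\overline{R_1}(1-z)=\frac{q}{q-1}\sum_\chi\binom{R_1\chi}{\chi}\chi(z)$, this shows your $\mathcal{H}_n$ has trace exactly $(-1)^{n-1}\mathbb{P}(\mathrm{HD};z;\mathfrak p)$. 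The shifts give $(-1)^{n-1}$ and nothing else appears. But $\iota_{\mathfrak p}(r_1)(-1)$ is a nontrivial character whenever the denominator of $r_1$ is even. Start instead with $\mathcal{L}_{-r_1}(x-1)$, whose trace is $R_1(-1)\overline{R_1}(1-x)$; equivalently, twist $W_\lambda$ by the stalk of $\mathcal{L}_{r_1}$ at $-1$.

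\emph{The local monodromy at $1$.} Convolution does not always add a ``unipotent-free'' eigenvalue at $1$. The non-trivial eigenvalue of the pseudo-reflection is the character attached to $\sum_i(q_i-r_i)$, which is trivial exactly when $\gamma(\mathrm{HD})\in\mathbb{Z}$. For example, for $\mathrm{HD}_{\mathbb{K}_4(r)}$ one has $\sum_i(q_i-r_i)=2$. In that case the local monodromy at $1$ is a unipotent transvection, and this is the source of the weight-drop term $\delta_{\gamma(\mathrm{HD})=1}\psi_{\mathrm{HD}}(p)\,p$ in Theorem~\ref{thm-EHMM}. Your conclusions $\dim W_1=n-1$ and the bound $N(\mathfrak p)^{(n-1)/2}$ survive: a transvection also fixes a hyperplane, and Weil II only gives an upper bound. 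But your local analysis at $1$ must allow this case.

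Finally, the specialization step is stated too strongly. Since $\mathcal{H}_n$ is not lisse along $x=1$, for $\lambda\neq1$ the pullback along $1/\lambda$ is lisse only away from the primes with $\lambda\equiv1\pmod{\mathfrak p}$. At those primes $W_\lambda$ is in general ramified (think of the Legendre family at multiplicative reduction), so you must discard them. This is consistent with ``unramified almost everywhere''. For $\lambda=1$, generic base change gives the trace identity only for almost all $\mathfrak p$. To get it for every $\mathfrak p\nmid M\ell$, you need that $j_*$ commutes with base change along the divisor $\{x=1\}$, which holds because the monodromy there is tame.
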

	 Observe that condition (2) of Theorem~\ref{thm-EHMM} is  equivalent  to the statement that the 
	 $G(M)$-representation $ \chi_{\mathrm{HD}} \otimes \rho_{\{\mathrm{HD};1\}}-\delta_{\gamma(\mathrm{HD})=1}\cdot \psi_{\mathrm{HD}}\epsilon_\ell|_{G(M)}$ can be extended to the full Galois group $G_\mathbb{Q}:=\mathrm{Gal}\left(\overline{\mathbb{Q}}/\mathbb{Q}\right)$. This reduction is a consequence of the following standard extendability criterion. 
	\begin{proposition}\emph{\cite[Proposition 4.2]{allen_EHMM1}.}\label{extendableprop}
	Let $M$ be a positive integer. Assume $\rho$ is a semi-simple finite dimensional $\ell$-adic representation of $G(M)$ which is isomorphic to $\rho^{\tau}$ for each $\tau \in \textrm{G}_{\mathbb{Q}}$, then $\rho$ is extendable to $\textrm{G}_{\mathbb{Q}}$. Equivalently, for each nonzero prime ideal $\mathfrak{p}$ of $\mathbb{Z}[\zeta_{M}]$ unramified for $\rho$, we have $\mathrm{Tr} \, \rho(\textnormal{Frob}_{\mathfrak{p}}) \in \mathbb{Z}$.
	\end{proposition}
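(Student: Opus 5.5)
The plan is to prove the first assertion by Clifford theory for the normal subgroup $G(M)\trianglelefteq G_\mathbb{Q}$, whose quotient $\mathrm{Gal}(\mathbb{Q}(\zeta_M)/\mathbb{Q})\cong(\mathbb{Z}/M\mathbb{Z})^\times$ is finite. The one cohomological obstruction will be removed with Tate's theorem $H^2(G_K,\overline{\mathbb{Q}}_\ell^\times)=0$ for number fields $K$, applied to open subgroups of $G_\mathbb{Q}$. Throughout, $\rho^\tau(g):=\rho(\tilde\tau g\tilde\tau^{-1})$ for a lift $\tilde\tau\in G_\mathbb{Q}$; its isomorphism class depends only on the image of $\tau$ in the quotient.

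First I treat an irreducible $\rho$. For each $g\in G_\mathbb{Q}$ the hypothesis gives an intertwiner $A_g$ with $\rho(gxg^{-1})=A_g\rho(x)A_g^{-1}$ for all $x\in G(M)$. By Schur's lemma $A_g$ is unique up to a scalar, and I normalize $A_x=\rho(x)$ for $x\in G(M)$. Then $g\mapsto A_g$ is a projective representation $G_\mathbb{Q}\to PGL(W)$ extending $\rho$ modulo scalars. One must check continuity, which should follow by choosing the $A_g$ constant on cosets of $G(M)$ times $\rho$. By Tate's theorem this projective representation lifts to a genuine continuous representation $\tilde\rho$. Its restriction to $G(M)$ agrees with $\rho$ up to a character $\chi$ of $G(M)$ with values in scalars. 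The subtle point is that the lift need not restrict to $\rho$ on the nose, so I must show $\chi$ extends to $G_\mathbb{Q}$. Since $\chi(x)=\tilde\rho(x)\rho(x)^{-1}$, it is conjugation-invariant, i.e.\ $\chi^\tau=\chi$ for all $\tau$. Extending a $G_\mathbb{Q}$-invariant character of $G(M)$ is again a lifting problem whose obstruction lies in $H^2(G_\mathbb{Q},\overline{\mathbb{Q}}_\ell^\times)$, which vanishes. Twisting $\tilde\rho$ by the inverse of such an extension gives the desired extension of $\rho$.

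For semisimple $\rho$, write $\rho=\bigoplus_\sigma \sigma^{m_\sigma}$. Conjugation-invariance of $\rho$ means that $G_\mathbb{Q}$ permutes the irreducible constituents preserving multiplicities. Fix one orbit, let $H\supseteq G(M)$ be the stabilizer of a representative $\sigma$, and note $H=G_K$ for some subfield $K\subseteq\mathbb{Q}(\zeta_M)$. The irreducible case, run over $K$, extends $\sigma$ to $\tilde\sigma$ on $H$. Then $\mathrm{Ind}_H^{G_\mathbb{Q}}\tilde\sigma$ restricts to $G(M)$ as $\bigoplus_{\tau\in G_\mathbb{Q}/H}\sigma^\tau$ by Mackey's formula, since $G(M)$ is normal and contained in $H$. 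Summing over orbits with the multiplicities $m_\sigma$ recovers $\rho$, so the sum of these induced representations is the required extension.

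For the trace criterion I use the setting where it is applied: the representations of Theorem~\ref{thm:Katz}, with traces in $\mathbb{Z}[\zeta_M]$ compatible with $\iota_{\tau\mathfrak{p}}=\tau\circ\iota_\mathfrak{p}$. There one has $\mathrm{Tr}\,\rho^\tau(\mathrm{Frob}_\mathfrak{p})=\mathrm{Tr}\,\rho(\mathrm{Frob}_{\tau\mathfrak{p}})=\tau(\mathrm{Tr}\,\rho(\mathrm{Frob}_\mathfrak{p}))$. If all traces lie in $\mathbb{Z}$, then $\rho^\tau$ and $\rho$ have equal Frobenius traces at almost all primes. By Chebotarev density and Brauer--Nesbitt for semisimple representations, $\rho^\tau\cong\rho$, so the first part applies. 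Conversely, if $\rho$ extends to $\tilde\rho$ on $G_\mathbb{Q}$, then $\mathrm{Frob}_\mathfrak{p}$ and $\mathrm{Frob}_{\tau\mathfrak{p}}$ are $G_\mathbb{Q}$-conjugate, so their traces coincide. The trace is therefore fixed by $\mathrm{Gal}(\mathbb{Q}(\zeta_M)/\mathbb{Q})$ and, being an algebraic integer, lies in $\mathbb{Z}$.

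The main obstacle is the irreducible case: controlling the scalar ambiguity so that the lifted representation restricts exactly to $\rho$, which is the twisting step above. Continuity of the projective representation also needs care.
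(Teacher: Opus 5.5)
\textbf{The gap is the twisting step in your irreducible case, and it cannot be repaired.}

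You claim that a $G_{\mathbb{Q}}$-invariant character $\chi$ of $G(M)$ extends to $G_{\mathbb{Q}}$ because its obstruction lies in $H^2(G_{\mathbb{Q}},\overline{\mathbb{Q}}_\ell^\times)=0$. That is not where the obstruction lives. For finite-order characters, inflation--restriction with trivial coefficients $\mathbb{Q}/\mathbb{Z}$ gives the exact sequence
\[
H^1(G_{\mathbb{Q}})\longrightarrow H^1(G(M))^{G_{\mathbb{Q}}}\xrightarrow{\ \mathrm{tg}\ }H^2\bigl((\mathbb{Z}/M\mathbb{Z})^\times\bigr)\longrightarrow H^2(G_{\mathbb{Q}}).
\]
So an invariant $\chi$ extends exactly when its transgression in $H^2((\mathbb{Z}/M\mathbb{Z})^\times,\mathbb{Q}/\mathbb{Z})$ vanishes. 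Tate's theorem $H^2(G_{\mathbb{Q}},\mathbb{Q}/\mathbb{Z})=0$ makes $\mathrm{tg}$ \emph{surjective}, which is the opposite of what you need. Hence for non-cyclic $(\mathbb{Z}/M\mathbb{Z})^\times$ there are invariant characters that do not extend.

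Here is a concrete case with $M=8$. Let $L=\mathbb{Q}(\sqrt{1+i},\sqrt{1-i})$ be the splitting field of $x^4-2x^2+2$. It is a $D_4$-extension of $\mathbb{Q}$ containing $\mathbb{Q}(\zeta_8)$, and $\mathrm{Gal}(L/\mathbb{Q}(\zeta_8))$ is the center of $D_4$, which equals its commutator subgroup. Let $\chi$ be the nontrivial character of $G(8)$ factoring through this center. It is $G_{\mathbb{Q}}$-invariant, yet it takes the value $-1$ on a commutator $[\sigma,\tau]\in G(8)$ of lifts of generators of $D_4$. Every character of $G_{\mathbb{Q}}$ kills that commutator, so $\chi$ has no extension.

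This $\chi$ is itself a semisimple $\rho$ with $\rho^\tau\cong\rho$ for all $\tau$, and its Frobenius traces are $\pm1\in\mathbb{Z}$. So the proposition as literally worded is false, in both its main clause and its ``equivalently'' clause, and no argument can close this step. Irreducible counterexamples also exist: twist an extendable irreducible representation by $\chi$, for instance the restriction to $G(8)$ of the $\ell$-adic Tate module of a non-CM elliptic curve over $\mathbb{Q}$. The result is not extendable, because obstruction classes in $H^2\bigl((\mathbb{Z}/8\mathbb{Z})^\times,\overline{\mathbb{Q}}_\ell^\times\bigr)$ multiply under tensor product.

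\textbf{What survives.} The paper gives no proof of its own, only the citation to Allen et al. Your argument does correctly prove extension up to twist: $\rho\otimes\chi$ extends for some invariant character $\chi$. It also proves that $\rho$ itself extends whenever the stabilizer quotients $H/G(M)$ have trivial Schur multiplier, in particular whenever $(\mathbb{Z}/M\mathbb{Z})^\times$ is cyclic. In that case you do not need Tate's theorem at all. Take a lift $\tilde\tau$ of a generator of order $n$ and rescale an intertwiner $A$ so that $A^n=\rho(\tilde\tau^{\,n})$, using Schur's lemma and the algebraic closedness of $\overline{\mathbb{Q}}_\ell$. Your orbit-and-induction step then finishes the semisimple case.

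This covers $M=2,4,6$, which are exactly the cases $r=1/2,1/3,1/4,1/6$ where the paper invokes the proposition. For $M=8,12,24$ the paper argues directly in Proposition~\ref{prop-main-thm} instead. Your trace-criterion argument in the Katz setting (Chebotarev plus Brauer--Nesbitt) is sound. However, it only delivers $\rho^\tau\cong\rho$, so it inherits the same gap.
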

	 Since the traces of Katz representations are described in terms of the $\mathbb{P}$-function,  in certain cases condition (2) of Theorem~\ref{thm-EHMM} reduces to establishing appropriate character sum identities among the functions
	$\mathbb{P}(\mathrm{HD};1;\mathfrak p)$.
	Now we recall one such transformation formula due to Li \emph{et al.} \cite{whipple}.
	\begin{lemma}\emph{\cite[Proposition 1]{whipple}.}\label{trans_P_function}
    Let $\mathbb F_q$ be a finite field of characteristic $p>2$, let $A_i,B_j\in\widehat{\mathbb F_q^\times}$, and let $t\neq0$. Then
	\begin{align*}
			&{_{n+1}}\mathbb P_n\!\left(
			\begin{array}{cccc}
				A_1 & A_2 & \dots & A_{n+1} \\
				& B_2 & \dots & B_{n+1}
			\end{array}
			| \frac{1}{t}\right)\notag\\
			&=
			A_1(-t)\!\left(\prod_{i=2}^{n+1}A_iB_i(-1)\right)\cdot
			{_{n+1}}\mathbb {P}_n\left(
			\begin{array}{cccc}
				A_1 & A_1\overline{B_2} & \dots & A_1\overline{B_{n+1}} \\
				& A_1\overline{A_2} & \dots & A_1\overline{A_{n+1}}
			\end{array}
			| t\right).
	\end{align*}
	\end{lemma}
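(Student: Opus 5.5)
The plan is to prove the transformation by expanding both sides through the definition of ${}_{n+1}\mathbb{P}_n$ as a character sum and matching them term by term after a change of summation variable. On the left, $\chi(1/t)=\overline{\chi}(t)$. To make the character of $t$ on the right appear, I would substitute $\chi=\overline{A_1}\,\overline{\psi}$, which is a bijection on $\widehat{\mathbb{F}_q^\times}$. Then $\chi(1/t)=A_1(t)\psi(t)$, which already accounts for the $A_1(t)$ part of the prefactor $A_1(-t)$. The prefactor $\frac{q^{n+1}}{q-1}$ is common to both sides. So it remains to show, for every $\psi$, that
\[
{\overline{\psi}\choose \overline{A_1}\,\overline{\psi}}\prod_{i=2}^{n+1}{A_i\overline{A_1}\,\overline{\psi}\choose B_i\overline{A_1}\,\overline{\psi}}
\]
equals $A_1(-1)\prod_i A_iB_i(-1)$ times $\prod_i B_i A_i(-1)\cdot(\text{the sign from }\prod A_1\overline{B_i}\,A_1\overline{A_i}(-1))$ times ${A_1\psi\choose \psi}\prod_i{A_1\overline{B_i}\psi\choose A_1\overline{A_i}\psi}$, up to an overall sign that I will track carefully.

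The key tool is a pair of exact symmetries of the binomial coefficient ${R_1\choose R_2}=\frac{R_2(-1)}{q}J(R_1,\overline{R_2})$, both coming from elementary substitutions in the Jacobi sum. The first is $J(A,B)=J(B,A)$, via $x\mapsto 1-x$. The second is $J(A,B)=A(-1)J(A,\overline{AB})$, via $x\mapsto x/(x-1)$, valid because $\chi(0)=0$. These two generate Greene's identities
\[
{A\choose B}={A\choose A\overline{B}},\qquad {A\choose B}=B(-1){B\overline{A}\choose B},
\]
together with the consequence ${A\choose B}={\overline{B}\choose\overline{A}}\cdot(\text{sign})$. With the Jacobi-sum normalization and the convention $\chi(0)=0$ used here, I expect these to hold exactly for all characters, with no $\delta$-correction terms. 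I would verify this directly from the substitutions, including the degenerate cases where $A$, $B$, or $AB$ is trivial.

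Each factor is then treated separately. The first factor is ${\overline{\psi}\choose\overline{A_1}\,\overline{\psi}}$. Applying ${A\choose B}=B(-1){B\overline A\choose B}$ gives $\overline{A_1}\,\overline{\psi}(-1){\overline{A_1}\choose \overline{A_1}\,\overline{\psi}}$. The inversion rule then turns this into a multiple of ${A_1\psi\choose A_1}$, and ${A\choose B}={A\choose A\overline B}$ brings it to ${A_1\psi\choose\psi}$. For each $i\ge2$, the factor is ${A_i\overline{A_1}\,\overline{\psi}\choose B_i\overline{A_1}\,\overline{\psi}}$. Inverting both characters gives ${A_1\overline{B_i}\psi\choose A_1\overline{A_i}\psi}$ times a sign character evaluated at $-1$. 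Every sign that appears has the form $\lambda(-1)$, where $\lambda$ is a product of $A_1,A_i,B_i,\psi$.

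The crucial check, and in my view the main obstacle, is that all $\psi(-1)$ contributions cancel. There are $n+1$ factors, each contributing some power of $\psi(-1)$, and these must pair off so that no dependence on $\psi$ survives in the sign. The remaining constants must assemble into exactly $A_1(-1)\prod_{i\ge2}A_iB_i(-1)$ once both normalizing prefactors $\prod R_iQ_i(-1)$ are accounted for. On the right-hand side that prefactor is $\prod A_1\overline{B_i}A_1\overline{A_i}(-1)=\prod A_iB_i(-1)$, since $A_1^2(-1)=1$.

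I would do the bookkeeping on a single factor first, to fix which Greene identity produces which sign, and then multiply the results. If the degenerate-case check fails, it would most likely be because some identity acquires a $\delta$-term when an argument is trivial. In that case I would fall back on rewriting everything via $J(A,B)=g(A)g(B)/g(AB)$ for generic $\psi$, and handle the finitely many exceptional $\psi$ separately by direct evaluation.
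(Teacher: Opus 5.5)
Your reindexing argument is correct and complete. After $\chi=\overline{A_1}\,\overline{\psi}$, the single exact identity ${A\choose B}=AB(-1){\overline{B}\choose\overline{A}}$ (which is just $J(A,B)=J(B,A)$, valid for all characters since $\chi(0)=0$) gives ${\overline{\psi}\choose\overline{A_1}\,\overline{\psi}}=A_1(-1){A_1\psi\choose\psi}$ and ${A_i\overline{A_1}\,\overline{\psi}\choose B_i\overline{A_1}\,\overline{\psi}}=A_iB_i(-1){A_1\overline{B_i}\psi\choose A_1\overline{A_i}\psi}$. So every $\psi(-1)$ appears squared, and both sides reduce to $\frac{q^{n+1}}{q-1}A_1(-t)\sum_\psi{A_1\psi\choose\psi}\prod_{i\ge2}{A_1\overline{B_i}\psi\choose A_1\overline{A_i}\psi}\psi(t)$; this makes your detour through the other Greene identities and the Gauss-sum fallback unnecessary. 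The paper gives no proof of its own and only cites Li et al. Your term-by-term matching of the character sums is the standard argument for such $z\mapsto 1/z$ transformations.
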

	\section{Construction of $\mathbb{K}_4$ and $\mathbb{K}_5$  functions}
	In this section, we construct two important classes of eta-quotients arising from EHMM,  analogous to the $\mathbb{K}_1, \mathbb{K}_2$, and  $\mathbb{K}_3$ functions studied in \cite{allen_EHMM2, allen_EHMM1, grove, rosen2}. We denote these new families by $\mathbb{K}_4$ and $\mathbb{K}_5$. They play a key role in the construction of the Hecke eigenforms $f_{\mathrm{HD}_{\mathbb{K}_{4}\left(r\right)}}^{\sharp}$  and $f_{\mathrm{HD}_{\mathbb{K}_{5}\left(r\right)}}^{\sharp}$,  which appear in Theorem~\ref{main-thm-K4} and Theorem~\ref{main-thm-K5}, respectively.
	\subsection{The $\mathbb{K}_4$-functions}
	Consider the hypergeometric data
	\begin{equation}\label{k4data}
		\mathrm{HD}_{\mathbb{K}_{4}\left(r\right)}: = \{\{1/2,1/2,1/2,r\},\{1,1,1,r+{1}/{2}\}\},
	\end{equation}
	where $r \in \mathbb{Q}$. We restrict $r$ to a suitable subset of $\mathbb{Q}$ in order to ensure convergence of the Euler integral formula, exclude degenerate cases, and obtain congruence holomorphic cusp forms. Specifically, we define 
	\begin{equation*}
		\mathbb{S}_{4} := \{r \in \mathbb{Q} \, | \, 0 < r < 1, 24r \in \mathbb{Z} \}.
	\end{equation*}
   Now if $r \in \mathbb{S}_{4}$, using the Euler integral formula \eqref{euler}, we obtain
	\begin{align}\label{4F3integralK4}
			&{_{4}}F_{3}\left(\begin{array}{cccc}
				\frac{1}{2}, & \frac{1}{2}, & \frac{1}{2}, & r \\
				& 1, & 1, & r+\frac{1}{2}
			\end{array}\mid 1
			\right)\notag\\
			&= \frac{\Gamma(r+1/2)}{\Gamma(r)\Gamma(1/2)} \int_{0}^{1} t^{r-1}(1-t)^{-1/2} \cdot {_{3}}F_{2}\left(\begin{array}{cccc}
				\frac{1}{2},  & \frac{1}{2}, & \frac{1}{2}\\
				& 1, & 1
			\end{array}\mid t
			\right) \, {dt}.
	\end{align}
	 We next compute the differential of \eqref{4F3integralK4} at a suitable  Hauptmodul. The Schwarz map \cite[Section 3.2.2]{FL} gives a connection between the $_{3}F_{2}(\{1/2,1/2,1/2\},\{1,1\};t)$ series and the congruence subgroup $\Gamma_0(2)$. Consequently, we are led to the modular function
	\begin{equation}\label{hauptmodul}
	t_2(\tau) = -64 \frac{\eta(2 \tau)^{24}}{\eta(\tau)^{24}},
    \end{equation}
    where 
   \begin{equation*}
   	\eta(\tau):=q^{1/24}\prod_{n=1}^{\infty}(1-q^n)
   \end{equation*}
   is the Dedekind-eta function, $q=e^{2\pi i \tau}$ and $\tau$ lies in the complex upper-half plane  $\mathcal{H} := \{\tau \in \mathbb{C} \, | \, \textrm{Im}(\tau) > 0\}$.
   It is known that $t_2(\tau)$ serves as a Hauptmodul for the congruence subgroup $\Gamma_0(2)$. Now we recall the three weight $1/2$ Jacobi theta functions from \cite{borwein1, zagier}.
   \begin{align*}
		\theta_3(\tau)=\frac{\eta(\tau)^5}{\eta(\tau/2)^2\eta(2\tau)^2},\
		\theta_4(\tau)=\frac{\eta(\tau/2)^2}{\eta(\tau)},\
		\theta_2(\tau)=\frac{2\eta(2\tau)^2}{\eta(\tau)}.
	\end{align*}
	Using the explicit representation of above Jacobi theta series as eta-quotients in \eqref{hauptmodul}, we obtain
	\begin{equation}\label{hauptmodul_theta}
	t_2(\tau) =-\frac{\theta_2(\tau)^8}{4\theta_3(\tau)^4\theta_4(\tau)^4}.	
	\end{equation}
 Recall the following identities from \cite[{Appendix A}]{allen_EHMM1}:
  \begin{equation}\label{3F2_hauptmodul}
	_3F_2\left(\begin{array}{cccc}
		\frac{1}{2} & \frac{1}{2}  & \frac{1}{2} \\
		&1 & 1
	\end{array}|t_2(\tau)
	\right)=\theta_4(2\tau)^4, 
  \end{equation}
  \text{and}
   \begin{equation}\label{derivative}
	q \frac{d t_2(\tau)}{t_2(\tau) dq}=(1-t_2(\tau))^{1/2}\theta_4(2\tau)^4.
  \end{equation}
  Now evaluating the differential of \eqref{4F3integralK4} at $t = t_2(\tau)$, and using \eqref{hauptmodul_theta}, \eqref{3F2_hauptmodul}, and \eqref{derivative}, we obtain
  \begin{equation}\label{K4eval}
		t_2(\tau)^{r}(1-t_2(\tau))^{-1/2} \cdot \, _3F_2\left(\begin{array}{cccc}
			\frac{1}{2} & \frac{1}{2}  & \frac{1}{2} \\
		&1 & 1
		\end{array}|t_2(\tau)
		\right) q \frac{d t_2(\tau)}{t_2(\tau) dq} = (-64)^r \mathbb{K}_{4}\left(r\right)(\tau),
	\end{equation}
	where the $\mathbb{K}_{4}$ functions are defined as 
	\begin{equation*}
		\mathbb{K}_{4}\left(r\right)(\tau) := {\eta(2 \tau)^{24r-8}}{\eta(\tau)^{-24r+16}}, \ \text{for}\ r \in \mathbb{S}_{4}.
	\end{equation*}
	\subsection{The $\mathbb{K}_5$-functions}
   Similar to the hypergeometric data $\mathrm{HD}_{\mathbb{K}_{4}\left(r\right)}$, we now consider the hypergeometric  data
   \begin{equation*}
		\mathrm{HD}_{\mathbb{K}_{5}\left(r\right)}: = \{\{1/3,r\},\{1,1\}\},
	\end{equation*}
	where $r \in \mathbb{Q}$. We  choose a subset of $\mathbb{Q}$, denoted by $\mathbb{S}_5$, analogous to $\mathbb{S}_4$, given by
	\begin{equation*}
		\mathbb{S}_{5} := \{r \in \mathbb{Q} \, | \, 0 < r < 2/3, 12r \in \mathbb{Z} \}.
	\end{equation*}
   Now if $r \in \mathbb{S}_{5}$, using the Euler integral formula \eqref{euler}, we have
   \begin{align}\label{2F1integralK5}
		{_{2}}F_{1}\left(\begin{array}{cccc}
			\frac{1}{3}, & r \\
			& 1
		\end{array}\mid 1
		\right)= \frac{1}{\Gamma(r)\Gamma(1-r)} \int_{0}^{1} t^{r-1}(1-t)^{-r-1/3} {dt}.
	\end{align}
	We consider the modular function
	\begin{equation*}
		t_3(\tau) = 27 \frac{\eta(3 \tau)^{9}}{\left(3\eta(3\tau)^{3}+\eta\left(\frac{\tau}{3}\right)^3\right)^3},
	\end{equation*}
	which serves as a Hauptmodul for the congruence subgroup $\Gamma_0(3)$. Now we recall the cubic analogues of the  weight $1/2$ Jacobi theta functions, namely $a(\tau), b(\tau),$ and $c(\tau)$, as introduced in \cite{borwein}.
	\begin{align*}
		a(\tau)&=\frac{3\eta(3\tau)^{3}+\eta\left(\frac{\tau}{3}\right)^3}{\eta(\tau)},\\
		b(\tau)&=\frac{\eta(\tau)^3}{\eta(3\tau)},\\
		c(\tau)&=\frac{3\eta(3\tau)^3}{\eta(\tau)}.
	\end{align*}
	 From \cite{grove}, we have
	\begin{equation}\label{derivative-2}
		t_3(\tau)=\frac{c^3(\tau)}{a^3(\tau)},\ 1-t_3(\tau)=\frac{b^3(\tau)}{a^3(\tau)}, \ \text{and}\	q \frac{d t_3(\tau)}{t_3(\tau) dq}=\frac{b^3(\tau)}{a(\tau)}.
	\end{equation}
	Now evaluating the differential of \eqref{2F1integralK5} at $t = t_3(\tau)$ and using  \eqref{derivative-2}, in analogy with \eqref{K4eval}, we deduce
	\begin{equation*}
		t_3(\tau)^{r}(1-t_3(\tau))^{-r-1/3}  q \frac{d t_3(\tau)}{t_3(\tau) dq} = {27}^r\cdot \mathbb{K}_{5}\left(r\right)(\tau),
	\end{equation*}
	where the $\mathbb{K}_{5}$ functions are defined as 
	\begin{equation*}
		\mathbb{K}_{5}\left(r\right)(\tau) := {\eta(3 \tau)^{12r-2}}{\eta(\tau)^{-12r+6}},\ 	\text{for}\ r \in \mathbb{S}_{5}.
	\end{equation*}
	\par
	If $r=a/b$ is in reduced form, we set $N_{\mathbb{K}_{4}}(r):=b$ and $N_{\mathbb{K}_{5}}(r):=b$. Then, by a well-known classical result (see, for example, \cite{ono_modularity}), the functions $\mathbb{K}_{4}(r)\left(N_{\mathbb{K}_{4}}(r)\tau\right)$ for $r \in \mathbb{S}_{4}$ and $\mathbb{K}_{5}(r)\left(N_{\mathbb{K}_{5}}(r)\tau\right)$ for $r \in \mathbb{S}_{5}$   are congruence holomorphic cuspforms of weight four and two, respectively. The  levels and the associated characters for the families $\mathbb{K}_4$ and $\mathbb{K}_5$ are listed in Table~\ref{K4table} and Table~\ref{K5table}, respectively.
	\renewcommand{\arraystretch}{1.2}
	\begin{table}[ht]
		\begin{center}
			\begin{tabular}{|c|c|c|c|c|c|}
				\hline
				\text{Families} & {$r = j/24$} & $N_{\mathbb{K}_{4}}(r)$ & \text{Eigenform(s)} & \text{Level} & \textrm{Character} \\ \hline
				$(1)$ & $1/2$ & $2$ & $f_{8.4.a.a} = \mathbb{K}_{4}\left(\frac{1}{2}\right)(2 \tau)$ & $8$ & \text{trivial} \\ \hline
				$(2)$ &$1/3,2/3$ & $3$ & $f_{9.4.a.a} = \mathbb{K}_{4}\left(\frac{1}{3}\right)(3 \tau)$ & $9$ &  \text{trivial}\\
				&&&$f_{9.4.a.a}(2 \tau) = \mathbb{K}_{4}\left(\frac{2}{3}\right)(3 \tau)$ && \\ \hline
				$(3)$ & $1/4,3/4$ & $4$ & $f_{32.4.a.a} = \mathbb{K}_{4}\left(\frac{1}{4}\right)(4 \tau)$ & $32$ & \text{trivial} \\ 
				&&&$-8 \mathbb{K}_{4}\left(\frac{3}{4}\right)(4 \tau)$ &&\\
				&&&$f_{32.4.a.c} = \mathbb{K}_{4}\left(\frac{1}{4}\right)(4 \tau)$&&\\
				&&&$+8 \mathbb{K}_{4}\left(\frac{3}{4}\right)(4 \tau)$ && \\ \hline
				$(4)$ & $1/6,5/6$ & $6$ & $f_{72.4.a.d} = \mathbb{K}_{4}\left(\frac{1}{6}\right)(6 \tau)$ & $72$&\text{trivial}  \\ 
				&&&$-16\mathbb{K}_{4}\left(\frac{5}{6}\right)(6 \tau)$ &&\\ 
				&&&$f_{72.4.a.a} = \mathbb{K}_{4}\left(\frac{1}{6}\right)(6 \tau)$&&\\
				&&&$+16\mathbb{K}_{4}\left(\frac{5}{6}\right)(6 \tau)$&&\\ \hline
				$(5)$&$1/8,3/8$&$8$&$f_{128.4.b.e}=\mathbb{K}_{4}\left(\frac{1}{8}\right)(8 \tau)$ &$128$ &$\chi(2)$\\
				&$5/8,7/8$& & $-\beta_1\mathbb{K}_{4}\left(\frac{3}{8}\right)(8 \tau)$&&\\
				&&& $+\beta_2\mathbb{K}_{4}\left(\frac{5}{8}\right)(8 \tau)$&&\\
				&&& $-\frac{1}{5}\beta_1\beta_2\mathbb{K}_{4}\left(\frac{7}{8}\right)(8 \tau)$&&\\\hline
				$(6)$ & $1/12,5/12$  & $12$ & $f_{288.4.a.l} = \mathbb{K}_{4}\left(\frac{1}{12}\right)(12 \tau)$& $288$ & \text{trivial}\\
				& $7/12,11/12$& & $+ 32  \mathbb{K}_{4}\left(\frac{5}{12}\right)(12 \tau)$ &&\\ 
				&&&$+8\sqrt{3}\mathbb{K}_{4}\left(\frac{7}{12}\right)(12 \tau)$ &&\\ 
				&&&$+4\sqrt{3}\mathbb{K}_{4}\left(\frac{11}{12}\right)(12 \tau)$ &&\\ \hline
				$(7)$ & $1/24,5/24$  & $24$ & $f_{1152.4.d.q} = \mathbb{K}_{4}\left(\frac{1}{24}\right)(24 \tau)$& $1152$ & $\chi(2)$\\
				& $7/24,11/24$& & $-\alpha_1  \mathbb{K}_{4}\left(\frac{5}{24}\right)(24 \tau)$ &&\\ 
				&$13/24,17/24$&&$+\alpha_2\mathbb{K}_{4}\left(\frac{7}{24}\right)(24 \tau)$ &&\\ 
				&$19/24,23/24$&&$-\alpha_3\mathbb{K}_{4}\left(\frac{11}{24}\right)(24 \tau)$ &&\\
				&&&$-\alpha_4\mathbb{K}_{4}\left(\frac{13}{24}\right)(24 \tau)$ &&\\
				&&&$+\alpha_5\mathbb{K}_{4}\left(\frac{17}{24}\right)(24 \tau)$ &&\\
				&&&$-\alpha_6\mathbb{K}_{4}\left(\frac{19}{24}\right)(24 \tau)$ &&\\
				&&&$-\alpha_7\mathbb{K}_{4}\left(\frac{23}{24}\right)(24 \tau)$ &&\\ \hline
			\end{tabular}
		\end{center}
		\caption{The families of $\mathbb{K}_{4}$ functions. In the above, $\chi(2):=(2/\cdot)$ denotes the Legendre symbol. The values of $\alpha$'s and $\beta$'s are given in Table~\ref{Table-new}.}\label{K4table}
		\vspace{2mm}
	\end{table}
	\begin{table}[ht]
	\begin{center}
		\begin{tabular}{|c|c|}
			\hline
			$\beta_1$ & $(2\nu^3+14\nu)/3$ \\ \hline
			$\beta_2$ &$8\nu^2+16$ \\ \hline
			$\alpha_1$ & $(-16\mu^6-468\mu^4-20112\mu^2+78026)/9477$\\\hline
			$\alpha_2$ & $(-100\mu^7-2358\mu^5-98322\mu^3+1620002\mu)/161109$ \\ \hline
		$\alpha_3$ &$(16\mu^7+516\mu^5+19020\mu^3-24788\mu)/5967$ \\ \hline
		$\alpha_4$ & $(-8\mu^6-288\mu^4-8544\mu^2+32560)/729$  \\  \hline
			$\alpha_5$ & $(-16\mu^6+480\mu^2+369824)/3159$ \\ \hline
		$\alpha_6$ &$(352\mu^7+13392\mu^5+455568\mu^3-580016\mu)/53703$ \\ \hline
		$\alpha_7$ & $(256\mu^7+6624\mu^5+198240\mu^3-3423968\mu)/161109$  \\  \hline
		\end{tabular}
	\end{center}
	\caption{Values of $\alpha$'s and $\beta$'s. Here, $\nu$ is a fixed root of $x^4+4x^2+9$ and $\mu$ is a fixed root of $x^8+28x^6+1023x^4-9212x^2+48841$.}\label{Table-new}
\end{table}
	\begin{table}[ht]
		\begin{center}
			\begin{tabular}{|c|c|c|c|c|c|}
				\hline
				\text{Families} & {$r = j/12$} & $N_{\mathbb{K}_{5}}(r)$ & \text{Eigenform(s)} & \text{Level} & \textrm{Character} \\ \hline
				$(1)$ & $1/2$ & $2$ & $f_{36.2.a.a} = \mathbb{K}_{5}\left(\frac{1}{2}\right)(2 \tau)$ & $36$ & \text{trivial} \\ \hline
				$(2)$ &$1/3$ & $3$ & $f_{27.2.a.a} = \mathbb{K}_{5}\left(\frac{1}{3}\right)(3 \tau)$ & $27$ &  \text{trivial}\\ \hline
				$(3)$ & $1/6$ & $6$ & $f_{36.2.a.a} = \mathbb{K}_{5}\left(\frac{1}{6}\right)(6 \tau)$ & $36$ & \text{trivial} \\  \hline
				$(4)$ & $1/12$, $7/12$ & $12$ & $f_{432.2.c.a} = \mathbb{K}_{5}\left(\frac{1}{12}\right)(12 \tau)$ & $432$ & $\chi(3)$\\
				&&&$-3\sqrt{-3}  \mathbb{K}_{5}\left(\frac{7}{12}\right)(12 \tau)$&&\\ \hline
					\end{tabular}
			\end{center}
		\caption{The families of $\mathbb{K}_{5}$ functions. In the above $\chi(3):=(3/\cdot)$ denotes the Legendre symbol.}\label{K5table}
  \end{table}
	\subsection{The Galois Families for the $\mathbb{K}_4$ and $\mathbb{K}_5$ Functions }
	We now consider the cases in which the $\mathbb{K}_4$ and $\mathbb{K}_5$ functions can be completed to Hecke eigenforms, in a manner analogous to the $\mathbb{K}_1$, $\mathbb{K}_2$, and $\mathbb{K}_3$ functions. The instances in which the $\mathbb{K}_2$, $\mathbb{K}_3$, and $\mathbb{K}_4$ functions admit such completions are referred to as {\it Galois} cases. This notion was  first introduced in \cite{allen_EHMM1} and further developed in \cite{grove,rosen1}. Following \cite{allen_EHMM1, grove}, we extend this notion to the $\mathbb{K}_4$ and $\mathbb{K}_5$ families. Let $j \in \{4,5\}$ be fixed. For $i=1,2$, define
	\[
	s_i :=
	\begin{cases}
		r_i + \tfrac{1}{2}, & \text{if } r_i \in \mathbb{S}_4,\\
		1, & \text{if } r_i \in \mathbb{S}_5.
	\end{cases}
	\]
	\begin{definition}
		We say that the pairs $(r_1,s_1)$ and $(r_2,s_2)$, with $r_1,r_2 \in \mathbb{S}_j$, are \emph{conjugate} if there exists an integer $c$, coprime to the level of the functions $\mathbb{K}_j(r_1)(\tau)$ and $\mathbb{K}_j(r_2)(\tau)$, such that
		$r_1 - c r_2 \in \mathbb{Z}\ \text{and} \ s_1 - c s_2 \in \mathbb{Z}.$
	\end{definition}
	\begin{definition}
		 We say that a pair $\{(r_1, s_1):r_1\in \mathbb{S}_j\}$ is {\it Galois} for a collection of modular forms $\{f(r_1, s_1):r_1\in \mathbb{S}_j\}$ if the modular forms $f((r_1, s_1)^\sigma)$ lie in the same Hecke orbit, for all conjugates $(r_1, s_1)^\sigma$ of $(r_1,s_1)$. A collection of {\it Galois} pairs is called  a {\it Galois} family.
	\end{definition}
	We next consider the \emph{Galois} families for the $\mathbb{K}_4$ and $\mathbb{K}_5$ functions. Among the entries in Table~\ref{K4table}, the families $(1), (3), (4), (5), (6),$ and $(7)$ constitute the six \emph{Galois} families for the $\mathbb{K}_4$ functions. In contrast, family $(2)$ in Table~\ref{K4table}, namely, the functions $\mathbb{K}_4(j/3)(3\tau)$ for $j=1,2$ does not form a {\it Galois} family. For the $\mathbb{K}_5$ functions, each family listed in Table~\ref{K5table}, with the exception of $(4)$, forms a \emph{Galois} family.
	\par
	We now describe how the eigenform is obtained by computing the relevant Hecke operators for the $\mathbb{K}_4(r)(\tau)$ and $\mathbb{K}_5(r)(\tau)$ functions. In particular, we discuss this explicitly for the family $\mathbb{K}_4(1/8)(8\tau)$.
	\begin{example}\label{TpforK4example}
		The eigenform $f_{{\mathrm{HD}_{\mathbb{K}_4(1/8)}}}^{\sharp}$ arises as a suitable linear combination of the $\mathbb{K}_4$ functions listed in Table~\ref{K4table}. In particular, the coefficients in this linear combination can be determined by analyzing the action of the Hecke operators $T_p$ on the cusp forms belonging to family $(5)$ in Table~\ref{K4table}. From Table~\ref{K4table}, we see that the subspace of cusp forms 
		$S_{4}(\Gamma_{0}(128), \chi(2))$ is spanned by the functions
		$f_j := \mathbb{K}_{4}\left(\tfrac{j}{8}\right)(8\tau) \in q^{j}(1 + \mathbb{Z}[[q^{12}]])
		\ \text{for } j = 1,3,5,7.$ To construct the eigenform $f_{128.4.b.e}$, we compute the action of the Hecke operators $T_p$ for $p = 2,3,5,7$ on these basis elements. A direct calculation shows that $T_2(f_j) = 0 \ \text{for all } j = 1,3,5,7.$
		The nontrivial actions of $T_3$, $T_5$, and $T_7$ are given as follows:
		\begin{align*}
			T_3(f_1)&=-40f_3, T_3(f_3)=f_1, T_3(f_5)=-8f_7, T_3(f_7)=5f_5;\\
			T_5(f_1)&=-320f_5, T_5(f_3)=-64f_7, T_5(f_5)=f_1, T_5(f_7)=5f_3;\\
			T_7(f_1)&=512f_7, T_7(f_3)=-64f_5, T_7(f_5)=-8f_3, T_7(f_7)=f_1.
		\end{align*}

		These computations yield several relations among the Hecke operators $T_p$ for $p \in \{3,5,7\}$, namely
		\begin{equation}\label{HeckerelationsK4ex}
			T_3^2 = -40, \quad T_5^2 = -320, \quad T_3T_5 = 5T_7.
		\end{equation}
		According to the results of EHMM, one expects a normalized Hecke eigenform of the form
		\[
		f_{{\mathrm{HD}_{\mathbb{K}_4\left(\frac{1}{8}\right)}}}^{\sharp}
		= \sum_{j \in (\mathbb{Z}/8\mathbb{Z})^\times} d_j \cdot \mathbb{K}_{4}\left(\tfrac{j}{8}\right)(8\tau),
		\]
		for constants $d_1, d_3, d_5, d_7$. The normalization condition imposes $d_1 = 1$, while the remaining coefficients are determined by the Hecke actions of $T_3$, $T_5$, and $T_7$. In particular, from \eqref{HeckerelationsK4ex}, these constants satisfy
		$d_3^2 = -40, \ d_5^2 = -320, \ 3d_7 = d_3d_5.$ Note that the eigenform $f_{{\mathrm{HD}_{\mathbb{K}_4(1/8)}}}^{\sharp}$ is not uniquely determined, as different choices of $d_3$ and $d_5$ are possible. For a suitable choice of these constants, the resulting eigenform agrees with the data presented in Table~\ref{K4table}. In this case, it corresponds to the modular form labeled $f_{128.4.b.e}$ in the LMFDB.
	\end{example}
	Similar computations yield all eigenform completions associated with the $\mathbb{K}_4$ and $\mathbb{K}_5$ functions listed in Table~\ref{K4table} and Table~\ref{K5table}, respectively.  For example, the non-trivial action  of the Hecke operator corresponding to the family $(3)$  in Table~\ref{K4table} is given by $T_3(f_1)=64f_3, T_3(f_3)=f_1$, where $f_j= \mathbb{K}_4(j/4)(4\tau)\in	S_{4}(\Gamma_{0}(32))$ for $j=1,3$. Similarly, for the family $(4)$  in Table~\ref{K4table}, the Hecke operator satisfies $T_5(f_1)=256f_5, T_5(f_5)=f_1$, where $f_j= \mathbb{K}_4(j/6)(6\tau)\in	S_{4}(\Gamma_{0}(72))$ for $j=1,5$. For the family $(4)$  in Table~\ref{K5table}, the Hecke operator action is $T_7(f_1)=-27f_7, T_7(f_7)=f_1$, where $f_j= \mathbb{K}_5(j/12)(12\tau)\in	S_{2}(\Gamma_{0}(432), \chi(3))$ for $j=1,7$. The constants appearing in the linear combinations of $\mathbb{K}_4$ functions for families  $(6)$ and $(7)$ are determined via analogous Hecke operator computations,  as recorded in Table~\ref{table5} and Table ~\ref{table6}, respectively. 
	\begin{table}[ht]
		\begin{center}
			\begin{tabular} {|c|c|c|c|c|}
		\hline
		&$f_1$&$f_5$&$f_7$&$f_{11}$      \\\hline 
		$T_5$&$208f_{11}$&$13f_{7}$&$16f_{5}$&$f_{1}$  \\\hline
		$T_7$&$832f_{7}$&$13f_{11}$&$f_{1}$&$64f_{5}$  \\\hline
		$T_{11}$&$1024f_{5}$&$f_{1}$&$16f_{11}$&$64f_{7}$  \\\hline
	\end{tabular}
   \end{center}
    \caption{The action of $T_5, T_7$, and $T_{11}$ on the subspace of $S_4(\Gamma_0(288))$ spanned by the functions $f_j=\mathbb{K}_4(j/12)(12\tau)$ for $j=1,5, 7, 11$.}\label{table5}
    \end{table}
	\begin{table}[ht]
		\begin{center}
			\begin{tabular} {|c|c|c|c|c|c|c|c|c|}
		\hline
		&$f_1$&$f_5$&$f_7$&$f_{11}$&$f_{13}$&$f_{17}$&$f_{19}$&$f_{23}$      \\\hline 
		$T_5$&$-140f_{5}$&$f_{1}$&$28f_{11}$&$-5f_{7}$&$20f_{17}$&$-7f_{13}$&$-4f_{23}$&$35f_{19}$  \\\hline
		$T_7$&$440f_{7}$&$-88f_{11}$&$f_{1}$&$-5f_{5}$&$-40f_{19} $&$8f_{23}$&$-11f_{13}$&$55f_{17}$ \\\hline
		$T_{11}$&$-2464f_{11}$&$-88f_{7}$&$28f_{5}$&$f_{1}$&$-32f_{23}$ &$-56f_{19}$&$44f_{17}$&$77f_{13}$ \\\hline
		$T_{13}$&$-4928f_{13}$&$704f_{17}$&$448f_{19}$&$-64f_{23}$&$f_1$&$-7f_5$&$-11f_7$&$77f_{11}$\\\hline
		$T_{17}$&$14080f_{17}$&$704f_{13}$&$256f_{23}$&$320f_{19}$&$20f_5$&$f_1$&$44f_{11}$&$55f_{7}$\\\hline
		$T_{19}$&$-17920f_{19}$&$-512f_{23}$&$448f_{13}$&$320f_{17}$&$-40f_7$&$-56f_{11}$&$f_1$&$35f_5$\\\hline
		$T_{23}$&$2048f_{23}$&$-512f_{19}$&$256f_{17}$&$-64f_{13}$&$-32f_{11}$&$8f_{7}$&$-4f_5$&$f_1$\\\hline
	\end{tabular}
	\end{center}
	\caption{The action of $T_5, T_7, T_{11}, T_{13}, T_{17}, T_{19}$, and $T_{23}$ on the subspace of $S_4(\Gamma_0(1152),\chi(2))$ spanned by the functions $f_j=\mathbb{K}_4(j/24)(24\tau)$ for $j=1,5, 7, 11, 13, 17, 19, 23$.}\label{table6}
    \end{table}
   \section{Proof of Theorem~\ref{main-thm-K4} and \ref{main-thm-K5}}
    Before proving Theorem~\ref{main-thm-K4}, we first prove a proposition.
    \begin{proposition}\label{prop-main-thm}
	If $r\in\{1/8, 1/12, 1/24\}$, then for any prime ideal $\mathfrak{p}\in \mathbb{Z}[\zeta_{M}]$ lying above a fixed prime $p\equiv 1 \pmod {M}$, we have
	$$\mathbb{P}\left(	\mathrm{HD}_{\mathbb{K}_{4}\left(r\right)};1;\mathfrak{p}\right)+\iota_{\mathfrak{p}}(1/2)(-1)p =-\iota_{\mathfrak{p}}(r)(-64)  \cdot a_{p}\left(f_{	\mathrm{HD}_{\mathbb{K}_{4}\left(r\right)}}^{\sharp}\right),
	$$
	where $M:=\mathrm{lcd}(1/2,r)$. The corresponding values of $f_{	\mathrm{HD}_{\mathbb{K}_{4}\left(r\right)}}^{\sharp}$  are listed in Table~\ref{main-thm-table1}.
    \end{proposition}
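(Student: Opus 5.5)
The plan is to apply Theorem~\ref{thm-EHMM} with $n=4$ to $\mathrm{HD}=\mathrm{HD}_{\mathbb{K}_4(r)}$, $\alpha^\flat=\{1/2,1/2,1/2\}$, $\beta^\flat=\{1,1,1\}$, $r_4=r$ and $q_4=r+1/2$. The hypotheses $0<r<r+1/2\le 1$ and $r_2=1/2<r+1/2$ hold for $r\in\{1/8,1/12,1/24\}$. Moreover
\[
\gamma(\mathrm{HD})=-1+\tfrac12+\tfrac12+\tfrac12+\tfrac12=1,
\]
so the correction term $\psi_{\mathrm{HD}}(p)\,p$ is present.

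\textbf{Condition (1).} Take $t=t_2(\tau)=-64q+O(q^2)$, so that $C_1=-64$. By \eqref{K4eval}, the form $f_{\mathrm{HD}}$ in Theorem~\ref{thm-EHMM} equals $(-64)^{-r}(-64)^r\,\mathbb{K}_4(r)(\tau)$, i.e. $\mathbb{K}_4(r)(\tau)$. After the substitution $\tau\mapsto N_{\mathbb{K}_4}(r)\tau$ this is a weight-four cusp form (Table~\ref{K4table}). The Hecke computations in Example~\ref{TpforK4example}, Table~\ref{table5} and Table~\ref{table6} show that for $p\equiv1\pmod M$ the operator $T_p$ acts diagonally on $f_1$ with an integer eigenvalue. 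Indeed, each $T_p$ permutes the $f_j$ according to multiplication by $p$ modulo $M$, and $p\equiv1$ fixes $j=1$. This holds because $f_j\in q^j(1+\mathbb{Z}[[q^{M}]])$, and the weight and level force $T_pf_1=\tilde b_pf_1$ with $\tilde b_p\in\mathbb{Z}$. Consequently $\tilde b_p=a_p(f^\sharp)$, since the other $\mathbb{K}_4(j/M)$ components of $f^\sharp$ have $q$-exponents $\not\equiv1$.

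\textbf{Condition (2).} We must show that
\[
\iota_\mathfrak{p}(r)(-64)^{-1}\,\iota_\mathfrak{p}(1/2)(-1)^2\,\mathbb{P}(\mathrm{HD};1;\mathfrak{p})\in\mathbb{Z}.
\]
By Proposition~\ref{extendableprop} and Theorem~\ref{thm:Katz}, this amounts to invariance of the trace under $\mathrm{Gal}(\mathbb{Q}(\zeta_M)/\mathbb{Q})$. Equivalently, we need $\iota_\mathfrak{p}(cr)(-64)^{-1}\mathbb{P}(\mathrm{HD}_{\mathbb{K}_4(cr)};1;\mathfrak{p})$ to be independent of $c\in(\mathbb{Z}/M)^\times$. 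Since $1/2$ is fixed by $c$ odd, the datum conjugates to $\{\{1/2,1/2,1/2,cr\},\{1,1,1,cr+1/2\}\}$ modulo $\mathbb{Z}$, so the Galois conjugates stay within the $\mathbb{K}_4$ family.

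I would try to prove this directly by expanding $\mathbb{P}$ as a sum over $\chi$. The shift by $r$ enters only through the factor $\binom{R\chi}{RQ\chi}$ with $Q=\varphi$, i.e. through a Jacobi sum $J(R\chi,\varphi\overline{\chi})$. Using the Hasse–Davenport (duplication) relation, $J(R\chi,\varphi\overline\chi)$ can be rewritten as $R\chi(4)$ times a quantity depending only on $R^2$-type data. The duplication factor explains the appearance of $-64=-4^3$. Alternatively, one can use Lemma~\ref{trans_P_function} together with the classical ${}_4F_3$ Whipple-type transformation to show that the $\mathbb{P}$-value times $\iota_\mathfrak{p}(r)(-64)^{-1}$ equals a Galois-stable expression, such as a character sum with rational values. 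Carrying out this character-sum identity uniformly for $M=8,12,24$ is the main obstacle. If the uniform argument fails, the fallback is a case-by-case verification exploiting that the Table~\ref{K4table} eigenforms have rational (or twisted-rational) $a_p$ for $p\equiv1\pmod M$.

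\textbf{Conclusion.} Given both conditions, Theorem~\ref{thm-EHMM} yields
\[
a_p(f^\sharp)=-\iota_\mathfrak{p}(r)(-64)^{-1}\,\mathbb{P}(\mathrm{HD};1;\mathfrak{p})-\psi_{\mathrm{HD}}(p)\,p.
\]
It remains to identify $\psi_{\mathrm{HD}}(p)$. We have
\[
\psi_{\mathrm{HD}}(p)\equiv -(-64)^{(p-1)r}\,\frac{\Gamma_p(1/2)}{\Gamma_p(1/2)^3}=-\frac{(-64)^{(p-1)r}}{\Gamma_p(1/2)^2}\pmod p.
\]
Using $\Gamma_p(1/2)^2=-\left(\frac{-1}{p}\right)$, this becomes $\iota_\mathfrak{p}(r)(-64)^{-1}\iota_\mathfrak{p}(1/2)(-1)$ up to the Teichmüller identification. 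Since $\psi_{\mathrm{HD}}$ is a root of unity of order dividing $M$, the congruence determines it. Multiplying through by $-\iota_\mathfrak{p}(r)(-64)$ then gives the stated identity for $p>29$. The finitely many small primes $p\equiv1\pmod M$ (namely $17$ for $M=8$, $13$ for $M=12$, and none below $29$ apart from these) are checked by direct numerical computation of $\mathbb{P}$ and the LMFDB coefficients.
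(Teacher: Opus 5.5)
Your verification of condition (1) and your evaluation of $\psi_{\mathrm{HD}}(p)$ are fine. The gap is condition (2) of Theorem~\ref{thm-EHMM}: the argument depends on it, and you leave it open. For $r\in\{1/8,1/12,1/24\}$ one has $\iota_{\mathfrak p}(r)(-64)=\pm1$, so condition (2) is the statement $\mathbb{P}(\mathrm{HD}_{\mathbb{K}_4(r)};1;\mathfrak p)\in\mathbb{Z}$. Equivalently, $\mathbb{P}(\mathrm{HD}_{\mathbb{K}_4(r)};1;\mathfrak p)=\mathbb{P}(\mathrm{HD}_{\mathbb{K}_4(cr)};1;\mathfrak p)$ for all $c\in(\mathbb{Z}/M\mathbb{Z})^\times$.

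The only transformation available, Lemma~\ref{trans_P_function} at $t=1$, carries $\mathrm{HD}_{\mathbb{K}_4(r)}$ to $\mathrm{HD}_{\mathbb{K}_4(1-r)}$. That is enough when the Galois orbit of $r$ has at most two elements. This is exactly how the paper handles $r=1/2,1/3,1/4,1/6$, and it is why the present proposition is split off. Here the orbits $\{1/8,3/8,5/8,7/8\}$, $\{1/12,5/12,7/12,11/12\}$ and $\{j/24\}$ have four or eight elements, and nothing you propose links, say, $r=1/8$ with $r=3/8$.

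Hasse--Davenport only rewrites $J(R\chi,\overline{R}\varphi\overline{\chi})$ (you dropped the $\overline{R}$) as $R\chi(-4)J(R\chi,R\chi)$. That expression is no more invariant under $R\mapsto R^c$ than the original, and you exhibit no Whipple-type identity that does the job. Your fallback is either circular, since it presupposes the relation between $\mathbb{P}$ and $a_p$ you are proving, or it covers only finitely many $p$. So Theorem~\ref{thm-EHMM} cannot actually be invoked.

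The paper avoids condition (2) and reruns the EHMM mechanism by hand. Set $U=\mathbb{P}(\mathrm{HD}_{\mathbb{K}_4(r)};1;\overline{\omega_p})+\iota_{\mathfrak p}(1/2)(-1)p+\iota_{\mathfrak p}(r)(-64)\,a_p(f^\sharp)\in\mathbb{Z}[\zeta_M]$. The CFGL together with the supercongruence \cite[Theorem 2.3]{allen_EHMM1} gives $U\equiv0\pmod{p^2}$.

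The Galois input then comes from the \emph{Galois family} property rather than from integrality of $\mathbb{P}$. For $p\equiv1\pmod M$, $T_p$ preserves exponents modulo $M$, so every $f_j=\mathbb{K}_4(j/M)(M\tau)$ is a $T_p$-eigenform. The operators $T_\ell$ in Example~\ref{TpforK4example} and Tables~\ref{table5}, \ref{table6} send $f_1$ to nonzero multiples of each $f_j$ and commute with $T_p$. Hence all $f_j$ share the eigenvalue $a_p(f^\sharp)$.

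Consequently the same congruence holds for each conjugate datum $\mathrm{HD}_{\mathbb{K}_4(kr)}$, that is, $\sigma_k(U)\equiv0\pmod{p^2}$ for every $k\in(\mathbb{Z}/M\mathbb{Z})^\times$. So $p^2$ divides $U$ in $\mathbb{Z}[\zeta_M]$. Every archimedean absolute value of $U$ is at most $5p^{3/2}+p$ (Katz for $\mathbb{P}$, Deligne for $a_p$), so $U=0$ once $p\ge 29$. Integrality of $\mathbb{P}$ then follows as a consequence rather than being an input.

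You already noted, in your treatment of condition (1), that $T_p$ fixes each residue class $j$ modulo $M$. Applying this to all $j$, not just $j=1$, is what closes the gap.
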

    \begin{proof}
	We assume that $\iota_{\mathfrak{p}}\left(\frac{1}{p-1}\right)=\overline{\omega_{p}}$.
		Let
	\begin{align*}
		 U:=  \mathbb{P}\left(	\mathrm{HD}_{\mathbb{K}_{4}\left(r\right)};1;\overline{\omega_{p}}\right) +{\omega}_{p}^{-\frac{p-1}{2}}(-1)p +\overline{\omega_{p}}^{(p-1)r}(-64)  \cdot a_{p}\left(f_{	\mathrm{HD}_{\mathbb{K}_{4}\left(r\right)}}^{\sharp}\right).
	\end{align*}
	Since the value of $U$ lies in $\mathbb{Z}[\zeta_M]$, we write it as $\sum_{i=1}^m a_ib_i$, where $b_i\in\mathbb{Z}$ and  $\{a_1,a_2,\ldots ,a_m\}$ is an integral basis of $\mathbb{Z}[\zeta_M]$. It is noted that, $\iota_{\mathfrak{p}}(r)(-64) $ is a quadratic character of $G(M)$ under the given assumption. Moreover, the values of  $a_p\left(f_{\mathrm{HD}_{\mathbb{K}_4\left(r\right)}}^{\sharp}\right)$ are integers with absolute values less than  $2p^{3/2}$, by Deligne bound. 
	\par We now proceed along the same lines as in Proposition 6.1 of \cite{allen_EHMM1}.
	By the explicit construction of the $\mathbb{K}_4$ functions from the hypergeometric background, together with the commutative formal group law (CFGL) and the supercongruence such as, \cite[Theorem 2.3]{allen_EHMM1}, combined with \eqref{H_q-definition}, we obtain
	$$U\equiv 0\pmod{p^2}.$$
    Since the family $\mathbb{K}_4(r)$ is \emph{Galois} for each $r\in\{1/8,1/12,1/24\}$, it follows that $\sigma_k(U)\equiv 0 \pmod {p^2}$ for every embedding $\sigma_k:\zeta_M\to\zeta_M^k$, where $k\in\left(\mathbb{Z}/M\mathbb{Z}\right)^\times$. Consequently, $p^2$ divides  $b_i$ for all $i=1, 2, \ldots, m$. Assume that at least one of the $b_i$'s is nonzero. Then the  norm of $U$ is at least $p^2$. On the other hand, by Theorem~\ref{thm:Katz}, any norm of $\mathbb{P}\left(	\mathrm{HD}_{\mathbb{K}_{4}\left(r\right)};1;\overline{\omega_{p}}\right)$ is less than or equal to $3p^{3/2}$. It follows that the norm of $U$ is at most $5p^{3/2}+p$. Hence, for $p\geq 29$, we must have $U=0$. The remaining finitely many primes can be verified by direct computation.
    \end{proof}
    We now prove Theorem~\ref{main-thm-K4} using  Theorem~\ref{thm-EHMM}.
    \begin{proof}[Proof of Theorem~\ref{main-thm-K4}]
    We  observe that the hypergeometric data  $\mathrm{HD}_{\mathbb{K}_{4}(r)}$, defined in \eqref{k4data}, satisfies the initial hypotheses of	Theorem~\ref{thm-EHMM} for all $$r \in \{1/2, 1/3, 1/4, 1/6, 1/8, 1/12, 1/24\}.$$ The cases $r=1/8, 1/12,$ and $1/24$ are established in Proposition~\ref{prop-main-thm}. Thus, it remains to analyze the cases corresponding to families $(1)-(4)$. Condition $(1)$ of Theorem~\ref{thm-EHMM} follows from the construction of the functions $\mathbb{K}_{4}(r)(N_{\mathbb{K}_{4}}(r)\tau)$ given in \eqref{K4eval}. Moreover, the families $(1)-(4)$ in Table~\ref{K4table} have at most two conjugates. 
   \par
    We now consider the condition $(2)$ of Theorem~\ref{thm-EHMM}. A direct computation shows that  $\iota_{\mathfrak{p}}(r)(-64)=1$ for $r=1/3,1/4$, and $\iota_{\mathfrak{p}}(r)(-64)=\iota_{\mathfrak{p}}(r)(-1)$ for $r=1/2, 1/6$. By Proposition~\ref{extendableprop} and Lemma~\ref{trans_P_function}, the function $\mathbb{P}\left(	\mathrm{HD}_{\mathbb{K}_{4}\left(r\right)};1;\mathfrak{p}\right)$ are integer-valued for $r = 1/2,1/3,1/4,$ and $1/6$. Consequently, the hypotheses of Theorem~\ref{thm-EHMM} are satisfied in each of these cases. Therefore, applying Theorem~\ref{thm-EHMM}, we obtain
    $$\mathbb{P}\left(	\mathrm{HD}_{\mathbb{K}_{4}\left(r\right)};1;\mathfrak{p}\right)+\iota_{\mathfrak{p}}(1/2)(-1)p =-\psi(r)(\mathfrak{p})  \cdot a_{p}\left(f_{	\mathrm{HD}_{\mathbb{K}_{4}}\left(r\right)}^{\sharp}\right),$$
    where $\psi{(r)}(\mathfrak{p}) := \iota_{\mathfrak{p}}(r)(-64)$, and the explicit values of $f_{	\mathrm{HD}_{\mathbb{K}_{4}\left(r\right)}}^{\sharp}$ are given in the statement of Theorem~\ref{main-thm-K4}. 
   \end{proof}
   \begin{proof}[Proof of Theorem~\ref{main-thm-K5}]
	We assume that $\iota_{\mathfrak{p}}\left(\frac{1}{p-1}\right)=\overline{\omega_{p}}$. We first prove that \eqref{main-thm-K5-eq1} holds modulo $p$. To this end, we express $\mathbb{K}_5(r)$ as a power series in two distinct ways and use the fact that the associated commutative formal group laws are isomorphic; consequently, the $p$-th coefficients of the two expansions agree modulo $p$. For the first expansion, we write $t^r(1-t)^{-r-1/3}$ as a power series in $t^r$ using the binomial  expansion
	\begin{align*}
		(1-t)^{-r-1/3}=\sum_{k=0}^{\infty}\frac{\left(r+\frac{1}{3}\right)_k}{k!}t^k.
	\end{align*}
	Following \cite{allen_EHMM1}, set
	\begin{align*} A_r(k):=\frac{\left(r+\frac{1}{3}\right)_k}{k!}=\frac{\Gamma\left(r+\frac{1}{3}+k\right)}{\Gamma\left(r+\frac{1}{3}\right)\Gamma(k+1)}.
	\end{align*} 
Write $r=\frac{m}{e}$ in lowest terms with $e\geq 1$, and substitute $u=t^{\frac{1}{e}}$ in the differential form in \eqref{2F1integralK5}. This yields 
	\begin{align*}
		\mathbb{K}_5(r)=27^{-r}e \sum_{k=0}^{\infty}\frac{\left(r+\frac{1}{3}\right)_k}{k!}u^{ke+m} \frac{du}{u}=27^{-r}e\sum_{l=1}^{\infty} c_lu^{l-1}du,
	\end{align*} 
	where $c_l\in\mathbb{Q}$ and, in particular, $c_m=1$.  Using \cite[(5.4), (5.2)]{allen_EHMM2} together with the Gross-Koblitz formula, we obtain
	\begin{align*}
		A_r((p-1)r)\equiv -J_{\overline{\omega_{p}}}(r,2/3-r) \pmod p.
	\end{align*}
	Next, we express $\mathbb{K}_5(r)$ as a Fourier $q$-series. We do this by substituting $t=27q+O(q^2)$. We  write 
	\begin{align*}
		\mathbb{K}_5(r)=e\sum_{l=1}^{\infty} b_lq_e^{l-1}dq_e, 
	\end{align*}
	where $b_l\in\mathbb{Q}$ and $q_e:=q^{1/e}$. From construction, it is noted that the $p$-th Fourier coefficient of $f_{\mathrm{HD}_{\mathbb{K}_{5}\left(r\right)}}^{\sharp}$ is $b_p$ for every prime $p\equiv 1\pmod {M^\prime}$. Therefore, taking $R=\mathbb{Z}_p[27^{-r}]$ and $\sigma(27^{-r})=27^{-pr}$ in Proposition~\ref{CFGL-isom}, we have
	\begin{align}\label{main-thm-K5-eq2}
		-\iota_{\mathfrak{p}}(r)(1/27)a_{p}\left(f_{\mathrm{HD}_{\mathbb{K}_{5}\left(r\right)}}^{\sharp}\right)\equiv J_{\overline{\omega_{p}}}(r,2/3-r)\pmod p.
	\end{align}
	Since $r-2/3<0$, we rewrite it as $r+1/3$ when applying the Gross-Koblitz formula, which gives
	\begin{align*}
		-J_{\overline{\omega_{p}}}(1-r,-2/3+r)\equiv \pi^{(p-1)}\frac{\Gamma_p(1-r)\Gamma_p(r+1/3)}{\Gamma_p(1/3)} \pmod p. 
	\end{align*}
	Since $\pi^{p-1}=-p$, we have 
	\begin{align}\label{main-thm-K5-eq3}
		-\iota_{\mathfrak{p}}(1-r)(27)J_{\overline{\omega_{p}}}(1-r,r-2/3)\equiv 0 \pmod p.
	\end{align}
	Combining \eqref{main-thm-K5-eq2} and \eqref{main-thm-K5-eq3}, and applying \cite[(6.11)]{FL}, we deduce 
	\begin{align*}
	U_r:=a_{p}\left(f_{	\mathrm{HD}_{\mathbb{K}_{5}\left(r\right)}}^{\sharp}\right)+\iota_{\mathfrak{p}}(r)(27)\mathbb{P}\left(\mathrm{HD}_{\mathbb{K}_{5}\left(r\right)};1;\mathfrak{p}\right)+\iota_{\mathfrak{p}}(1-r)(27)\mathbb{P}\left(\overline{\mathrm{HD}_{\mathbb{K}_{5}\left(r\right)}};1;\mathfrak{p}\right)\\\equiv 0\pmod p.
   \end{align*}	
	  Since $U_r\in \mathbb{Z}[\zeta_{M^\prime}]$, we may write $U_r=\sum_{i=1}^s  u_iv_i$, where $u_i\in\mathbb{Z}$ and $\{v_1,v_2,\ldots,v_s\}$ is an integral basis of $\mathbb{Z}[\zeta_{M^\prime}]$. There are $\phi(M^\prime)$ different ways to embed $\mathbb{Z}[\zeta_{M^\prime}]$ to $\mathbb{Q}_p$. Since the values of $a_{p}\left(f_{\mathrm{HD}_{\mathbb{K}_{5}\left(r\right)}}^{\sharp}\right)$ are integers for $p\equiv 1\pmod {M^\prime}$, it follows that  
	$\sigma_k\left(U_r\right) \equiv 0 \pmod p,$
	for each embedding $\sigma_k:\zeta_{M^\prime}\to \zeta_{M^\prime}^k$, where $k\in\left(\mathbb{Z}/M^\prime\mathbb{Z}\right)^\times$.
	 Therefore, $p$ divides each $u_i$. If any $u_i\neq 0$, then the norm of $U_r$ is at least $p$. On the other hand, by the bounds for Jacobi sums together with Deligne’s bound for newforms, the norm of $U_r$ is at most $4\sqrt{p}$. Thus, for $p\geq 17$, we must have $u_i=0$ for all $i$, and hence $U_r=0$. This proves \eqref{main-thm-K5-eq1} for $p\geq 17$, and the remaining primes can be verified by direct computation.
    \end{proof}
\section{Proof of Theorem~\ref{rel-appell-modu-1} and Theorem~\ref{rel-appell-modu-2}}
In this section, we prove Theorem~\ref{rel-appell-modu-1} and Theorem~\ref{rel-appell-modu-2}. The proofs proceed as applications of Theorem~\ref{main-thm-K5}, through which we establish explicit relations between the Fourier coefficients of weight two Hecke eigenforms and special values of the Appell series $F_1^p$ and $F_2^p$.
    \begin{proof}[Proof of Theorem~\ref{rel-appell-modu-1}]
	From \cite[Corollary 1.1]{Li_Li},  we have
	\begin{align*}
	F_1^p(a;b,b^\prime;c;x,x;\mathfrak{p})=p\cdot {_{2}}{F}_{1}\left(\begin{array}{cccc}
		\iota_{\mathfrak{p}}(b+b^\prime) & \iota_{\mathfrak{p}}(a)  \\
		&\iota_{\mathfrak{p}}(c)
	\end{array}|x
	\right),
   \end{align*}
	for any $a,b,b^\prime,c\in\mathbb{Q}$ and $x\in\mathbb{F}_p$, where  $\mathfrak{p}$ is a prime ideal in $\mathbb{Z}[\zeta_{M^\prime}]$ lying above a prime $p\equiv 1\pmod{\mathrm{lcd}{(a,b, b^\prime,c)}}$. 
	Putting $c=1$, $x=1$, and using the identity  \eqref{rel-gre-fu}, we deduce
	\begin{align*}
	F_1^p(a;b,b^\prime;1;1,1;\mathfrak{p})=\iota_{\mathfrak{p}}(a)(-1){\mathbb{P}}\left(\{b+b^\prime,a\};\{1,1\};1;\mathfrak{p}\right).
	\end{align*}
	Substituting $a=r, b=1/6, b^\prime=1/6 $ and $a=1-r, b=1/3, b^\prime=1/3 $, and adding the resulting identities, the desired result follows from Theorem~\ref{main-thm-K5}.
   \end{proof}
   \begin{proof}[Proof of Theorem~\ref{rel-appell-modu-2}]
   From the result of	He \emph{et al.} \cite[Theorem 1.4]{He-et-al}, we have
   \begin{align*}\label{main_thm_eq_1}
	&F_2^p(a;b,b^\prime;c,c^\prime;x,1;\mathfrak{p})
	=p^2\iota_{\mathfrak{p}}(b^\prime+c^\prime)(-1) {_{3}{F}_{2}}\left(\begin{array}{cccc}
		\iota_{\mathfrak{p}}(a) & \iota_{\mathfrak{p}}(b) & \iota_{\mathfrak{p}}(a-c^\prime) \\
		&\iota_{\mathfrak{p}}(c) & \iota_{\mathfrak{p}}(a+b^\prime-c^\prime)
	\end{array}|x
	\right),
  \end{align*}
   for any $a,b,b^\prime,c,c^\prime\in\mathbb{Q}$ and $x\in\mathbb{F}_p$, where  $\mathfrak{p}$ is a prime ideal in $\mathbb{Z}[\zeta_{M^\prime}]$ lying above a prime $p\equiv 1\pmod{\mathrm{lcd}{(a,b, b^\prime,c,c^\prime)}}$.
   Substituting $b^\prime=c=1$, $x=1$, and using the identity from \cite[Theorem 3.15 (ii)]{greene}, we obtain
  \begin{align*}
		&F_2^p(a;b,1;1,c^\prime;1,1;\mathfrak{p})\\
		&=-p\iota_{\mathfrak{p}}(c^\prime)(-1)_{2}{F}_{1}\left(\begin{array}{cccc}
		 \iota_{\mathfrak{p}}(a)& \iota_{\mathfrak{p}}(b)  \\
		&\iota_{\mathfrak{p}}(1) 
	\end{array}|1
	\right)+p^2\iota_{\mathfrak{p}}(c^\prime)(-1){\iota_{\mathfrak{p}}(c^\prime)\choose\iota_{\mathfrak{p}}(c^\prime-a)}{\iota_{\mathfrak{p}}(b-a+c^\prime)\choose\iota_{\mathfrak{p}}(c^\prime-a)}.
   \end{align*}
   The identity  \eqref{rel-gre-fu} yields
  \begin{align*}
 	&F_2^p(a;b,1;1,c^\prime;1,1;\mathfrak{p})\\
 	&=-\iota_{\mathfrak{p}}(b+c^\prime)(-1){\mathbb{P}}\left(\{a,b\};\{1,1\};1;\mathfrak{p}\right)+p^2\iota_{\mathfrak{p}}(c^\prime)(-1){\iota_{\mathfrak{p}}(c^\prime)\choose\iota_{\mathfrak{p}}(c^\prime-a)}{\iota_{\mathfrak{p}}(b-a+c^\prime)\choose\iota_{\mathfrak{p}}(c^\prime-a)}.
   \end{align*} 
  Taking $a=1/3, b=r$ and $a=2/3, b=1-r$, adding the resulting identities, the desired result follows from Theorem~\ref{main-thm-K5}.
   \end{proof}
\section{Acknowledgement}
We are deeply grateful to Ling Long, Fang-Ting Tu and Brian Grove for their many helpful discussions while working on this project. Their feedback and comments have significantly improved the quality of this article.

\end{document}